\DeclareFontFamily{OMS}{rsfs}{\skewchar\font'60}
\DeclareFontShape{OMS}{rsfs}{m}{n}{<-5>rsfs5 <5-7>rsfs7 <7->rsfs10 }{}
\DeclareSymbolFont{rsfs}{OMS}{rsfs}{m}{n}
\DeclareSymbolFontAlphabet{\scr}{rsfs}
\DeclareSymbolFontAlphabet{\scr}{rsfs}
\newcommand\cE{{\mathcal E}}
\newcommand\cF{{\mathcal F}}
\newcommand\cO{{\mathcal O}}
\newcommand\cV{{\mathcal V}}
\newcommand\bbC{{\mathbb C}}
\newcommand\bbP{{\mathbb P}}
\newcommand\bbQ{{\mathbb Q}}
\newcommand\bbZ{{\mathbb Z}}
\newcommand{\rk}{{\rm rk}}
\DeclareMathOperator*{\bs}{Bs}
\DeclareMathOperator*{\pic}{Pic}
\newcommand{\Chow}[1]{\ensuremath{\mbox{\rm Chow}(#1)}}
\newcommand{\cHom}[2]{\ensuremath{\mathcal{H}om_{\mathcal{O}_X}(#1,#2)}}
\newcommand{\RC}[1]{\ensuremath{\mbox{\rm RatCurves}^n(#1)}}
\newtheorem{lemma1}{}[section]
\newenvironment{lemma}{\begin{lemma1}{\bf Lemma.}}{\end{lemma1}}
\newenvironment{thm}{\begin{lemma1}{\bf Theorem.}}{\end{lemma1}}
\newenvironment{prop}{\begin{lemma1}{\bf Proposition.}}{\end{lemma1}}
\newenvironment{cor}{\begin{lemma1}{\bf Corollary.}}{\end{lemma1}}
\newenvironment{remark}{\begin{lemma1}{\bf Remark.}\rm}{\end{lemma1}}
\newenvironment{defn}{\begin{lemma1}{\bf Definition.}}{\end{lemma1}}
\newenvironment{question}{\begin{lemma1}{\bf Question.}}{\end{lemma1}}
\newenvironment{thm A}{{\bf Theorem A.}}{}
\newenvironment{thm B}{{\bf Theorem B.}}{}
\newenvironment{thm C}{{\bf Theorem C.}}{}
\newenvironment{thm D}{{\bf Theorem D.}}{}
\newenvironment{remark*}{{\bf Remark.}}{}
\newenvironment{example*}{{\bf Example.}}{}
\newenvironment{assumption*}{{\bf Assumption.}}{}
\newenvironment{thm M}{{\bf Macaulay's Theorem.}}{}
\setlist[itemize]{leftmargin=*}
\setlist[enumerate]{leftmargin=*,align=left,nolistsep}
\numberwithin{equation}{section}
\title{Second Chern class of Fano manifolds and anti-canonical geometry} 
\date{\today}
\subjclass[2010]{14J45,14J30,14J35,14C20}
\keywords{Fano varieties, birationality, pluri-anti-canonical map, non vanishing, Chern class}
\author{Jie Liu}
\address{Jie Liu, Universit{\'e} de C{\^o}te d'Azur, CNRS, LJAD, France}
\email{jliu@unice.fr}
\begin{document}

\begin{abstract}
	Let $X$ be a Fano manifold of Picard number one. We establish a lower bound for the second Chern class of $X$ in terms of its index and degree. As an application, if $Y$ is a $n$-dimensional Fano manifold with $-K_Y=(n-3)H$ for some ample divisor $H$, we prove that $h^0(Y,H)\geq n-2$. Moreover, we show that the rational map defined by $\vert mH\vert$ is birational for $m\geq 5$, and the linear system $\vert mH\vert$ is basepoint free for $m\geq 7$. As a by-product, the pluri-anti-canonical systems of singular weak Fano varieties of dimension at most $4$ are also investigated.
\end{abstract}

\maketitle

\tableofcontents

\vspace{-0.2cm}
\section{Introduction}
An important outstanding problem in differential geometry asks which Fano manifolds $X$ with $\rho(X)=1$ admit a K{\"a}hler-Einstein metric, known as the so-called Yau-Tian-Donaldson conjecture. This conjecture was solved by Chen-Donaldson-Sun and Tian in \cite{ChenDonaldsonSun2014,ChenDonaldsonSun2015,Tian2015}. A weaker and more algebraic related question would ask whether the tangent bundle $T_X$ is (semi-)stable with respect to the anti-canonical divisor $-K_X$. We denote by $i_X$ the index of $X$ and set $\dim(X)=n$. By the work of Hwang, Peternell-Wi{\'s}niewski and Reid, there is a positive answer if $n\leq 6$, or $i_X=1$, or $i_X>(n+1)/2$ \cite{Reid1977,PeternellWisniewski1995,Hwang1998,Hwang2001}. Moreover, if $T_X$ is (semi-)stable, then the famous Bogomolov inequality gives a lower bound for the second Chern class of $X$ 
\begin{equation}\label{PositivityChern}
c_2(X)\cdot H^{n-2}\geq \frac{n-1}{2n}c^2_1(X)\cdot H^{n-2}=\frac{(n-1)i_X^2}{2n}H^n,
\end{equation}
where $H$ is the ample generator of $\pic{(X)}$. Our first result is the following weaker version of inequality \eqref{PositivityChern} without assuming the (semi-)stability of $T_X$. We will focus on the case $n\geq 7$ and $i_X\geq 2$ since the inequality \eqref{PositivityChern} holds for $n\leq 6$ or $i_X=1$ (cf. \cite{Reid1977,Hwang1998}).

\begin{thm}\label{Chern class}
	Let $X$ be a $n$-dimensional Fano manifold with $\rho(X)=1$ such that $n\geq 7$. Let $H$ be the fundamental divisor of $X$ and let $i_X$ be the index of $X$. 
	\begin{enumerate}
		\item\label{i_X=2} If $i_X=2$, then 
		\[c_2(X)\cdot H^{n-2}\geq \frac{11n-16}{6n-6} H^n.\]
		
		\item\label{i_X>2} If $3\leq i_X\leq n$, then 
		\[c_2(X)\cdot H^{n-2}\geq \frac{i_X(i_X-1)}{2} H^n.\]
	\end{enumerate}
\end{thm}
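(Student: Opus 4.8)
The plan is to run a Harder--Narasimhan argument on $T_X$ and to convert the failure of semistability into an explicit, controllable loss in Bogomolov's inequality. Let $0=E_0\subsetneq E_1\subsetneq\cdots\subsetneq E_k=T_X$ be the Harder--Narasimhan filtration of $T_X$ with respect to $H$, with semistable quotients $Q_i=E_i/E_{i-1}$ of rank $r_i$. Since $\rho(X)=1$ and $\pic{(X)}=\bbZ H$, each determinant is a multiple of $H$, say $c_1(Q_i)=a_i H$ with $a_i\in\bbZ$; the slopes $\mu_i=a_i/r_i$ are strictly decreasing, $\sum_i r_i=n$, and $\sum_i a_i=i_X$. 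Restricting a general member $f\colon\bbP^1\to X$ of a minimal dominating family of rational curves, the bundle $f^*T_X$ is nef, so $\mu_k=\mu_{\min}\ge 0$ and hence all $a_i\ge 0$.

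Next I would feed each semistable piece into the Bogomolov--Gieseker inequality $c_2(Q_i)\cdot H^{n-2}\ge \frac{r_i-1}{2r_i}\,c_1(Q_i)^2\cdot H^{n-2}$, and combine it with the filtration identity $c_2(T_X)=\sum_i c_2(Q_i)+\sum_{i<j}c_1(Q_i)\cdot c_1(Q_j)$ (the corrections coming from the singularities of the $Q_i$ are supported in codimension $\ge 3$ after saturation and do not affect intersection with $H^{n-2}$, or contribute with the favourable sign). Using $c_1(Q_i)=a_iH$ and simplifying, this collapses to the clean estimate
\[
c_2(X)\cdot H^{n-2}\ \ge\ \Big(\frac{i_X^2}{2}-\frac12\sum_i\frac{a_i^2}{r_i}\Big)H^n,
\]
so that everything reduces to the purely combinatorial problem of maximizing $\sum_i a_i^2/r_i=\sum_i a_i\mu_i$ subject to $\sum a_i=i_X$, $\sum r_i=n$, integrality, the strict ordering of the slopes, and an upper bound on $\mu_1$.

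The geometric heart of the argument is therefore a sharp upper bound on the maximal Harder--Narasimhan slope of $T_X$. The plan is to obtain it from the restriction $f^*T_X=\cO(2)\oplus\cO(1)^{\oplus(i_X\ell-2)}\oplus\cO^{\oplus(n+1-i_X\ell)}$ along a minimal rational curve (here $\ell=H\cdot C$): the Grauert--M\"ulich theorem forces the restriction of the semistable sheaf $E_1$ to be balanced and to land in the positive part $\cO(2)\oplus\cO(1)^{\oplus(i_X\ell-2)}$, while Wahl's characterization of $\bbP^n$ rules out an ample sub-line-bundle, hence any rank-one positive destabilizing subsheaf. For $i_X\ge 3$ these inputs combine to give the normalized bound $\mu_1\le 1$ (equality being realized by the quadric), whence $\sum_i a_i\mu_i\le\mu_1\sum_i a_i\le i_X$ and the stated inequality $c_2(X)\cdot H^{n-2}\ge\frac{i_X(i_X-1)}{2}H^n$ follows at once.

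The case $i_X=2$ is where the main difficulty lies. Here a minimal rational curve has the smallest possible anti-canonical degree, so its variety of minimal rational tangents is a single point and $f^*T_X=\cO(2)\oplus\cO^{\oplus(n-1)}$ has a one-dimensional positive part; the balancedness and embedding constraints then force the Harder--Narasimhan slopes to be very small, far below $1$, and the crude estimate $\mu_1\le 1$ no longer suffices. I expect to need a delicate case analysis on the number and ranks of the Harder--Narasimhan pieces, exploiting the integrality of the $a_i$ and $r_i$ together with the refined restriction bound, to show that $\sum_i a_i^2/r_i\le\frac{n+4}{3(n-1)}$; substituting this into the displayed inequality yields exactly $c_2(X)\cdot H^{n-2}\ge\frac{11n-16}{6n-6}H^n$. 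Pinning down this optimal combinatorial bound, and verifying that the extremal slope configuration is actually consistent with the geometry of minimal rational curves, is the step I anticipate to be the most involved.
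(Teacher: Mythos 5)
Your reduction is sound as far as it goes: the Whitney formula for the Harder--Narasimhan filtration together with Bogomolov's inequality on each semistable quotient does give
$c_2(X)\cdot H^{n-2}\geq\bigl(\tfrac{i_X^2}{2}-\tfrac12\sum_i a_i^2/r_i\bigr)H^n$, and this is essentially an unwound version of Langer's refined Bogomolov inequality, which the paper applies to $T_X$ as a black box. For part (2) your scheme works once you know $\mu_1\leq 1$ and $a_i\geq 0$, and done carefully it even recovers the paper's bound. But your proposed route to $\mu_1\leq 1$ is the first genuine gap. Grauert--M\"ulich governs restriction to general complete-intersection curves, not to minimal rational curves, and Wahl's theorem only excludes a rank-one ample subsheaf of $T_X$, so neither input forces $E_1|_C$ to avoid the $\cO(2)$ summand. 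Without that, a rank-$r_1$ subbundle of $T_X|_C$ can have degree $r_1+1$, and when $H\cdot C=1$ this gives only $\mu_1\leq (r_1+1)/r_1>1$, which is not enough. The fact you actually need is Hwang's theorem (\cite[Proposition 1 and 2]{Hwang1998}, Proposition \ref{Nontangent} in the paper): the maximal destabilizing subsheaf $E_1$ is a foliation and general minimal rational curves are \emph{not} tangent to it, so every summand of $E_1|_C$ has degree at most $1$, whence $a_1(H\cdot C)\leq r_1$ and $\mu_1\leq 1/(H\cdot C)\leq 1$.

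The second gap is in part (1), which you acknowledge is incomplete, but the sketch as written would fail. First, the splitting $f^*T_X=\cO(2)\oplus\cO^{\oplus(n-1)}$ presupposes $H\cdot C=1$; the paper shows that in the unstable case one has $H\cdot C\leq d\leq 2H\cdot C-2$, hence $H\cdot C\geq 2$, so this is exactly the configuration that cannot occur. Second, the combinatorial bound $\sum_i a_i^2/r_i\leq\frac{n+4}{3(n-1)}$ does not follow from the constraints you list: the configuration $(a_1,a_2,a_3)=(1,1,0)$, $(r_1,r_2,r_3)=(3,4,n-7)$ satisfies integrality, $a_i\geq 0$, $\sum a_i=2$, strictly decreasing slopes and $\mu_1\leq 1/3$, yet gives $\sum a_i^2/r_i=7/12>\frac{n+4}{3(n-1)}$ for $n\geq 8$. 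To close this you need two further geometric inputs that the paper supplies: (i) $\mu_H^{\min}(T_X)>0$ because $T_X$ is generically ample for Fano $X$ (\cite[Theorem 1.3]{Peternell2012}), which forces every $a_i\geq 1$ and hence exactly two Harder--Narasimhan pieces with $a_1=a_2=1$; and (ii) $r_1\geq d+1\geq H\cdot C+1\geq 3$, which uses non-tangency again together with the fact that $E_1|_C$ cannot be ample (else $X\cong\bbP^n$ by \cite[Proposition 2.7]{AraujoDruelKovacs2008}). With these, $\sum a_i^2/r_i\leq\frac13+\frac1{n-3}$, which is stronger than your target and yields the stated inequality; without them the claimed bound is simply not provable from your hypotheses.
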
 

It is easy to check that the inequalities given in Theorem \ref{Chern class} are strictly weaker than the expected inequality \eqref{PositivityChern} except $i_X=n$, so the statement is only interesting for $2\leq i_X\leq (n+1)/2$ and $n\geq 7$. Moreover, recall that $i_X>n$ if and only if $X\cong\bbP^n$ and $i_X=n+1$, and the stability of $T_{\bbP^n}$ is well-known. As the first application of this theorem, we prove an effective non-vanishing theorem for Fano manifolds of dimension $n$ and index $n-3$. 

\begin{thm}\label{Non-vanishingH}
	Let $X$ be a Fano manifold of dimension $n\geq 4$ and index $n-3$. Let $H$ be the fundamental divisor. Then $h^0(X,H)\geq n-2$.
\end{thm}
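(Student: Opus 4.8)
The plan is to convert the statement into a Riemann--Roch computation for $\chi(X,H)$ and then feed in the lower bound for $c_2(X)\cdot H^{n-2}$. I first reduce to $\rho(X)=1$: the relation $-K_X=(n-3)H$ forces the (pseudo)index of $X$ to be at least $n-3$, so whenever $\rho(X)\geq 2$ the known classification of Fano manifolds of large index and Picard number $\geq 2$ (Wi\'sniewski's classification of index $>n/2$, together with the high-index range of the generalized Mukai conjecture) either bounds $n$ or exhibits $X$ explicitly as a product or projective bundle; on each such variety $h^0(X,H)\geq n-2$ is checked by hand, and these occur only in bounded dimension. Henceforth I assume $\rho(X)=1$ with $H$ the ample generator of $\pic(X)$. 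Writing $H=K_X+(n-2)H$ and noting that $(n-2)H$ is ample for $n\geq 4$, Kodaira vanishing gives $H^i(X,H)=0$ for $i>0$, hence $h^0(X,H)=\chi(X,H)$.

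Next I would pin down the Hilbert polynomial $P(m)=\chi(X,mH)$, which has degree $n$ and leading coefficient $H^n/n!$. Kodaira vanishing gives $P(0)=\chi(\mathcal O_X)=1$; Serre duality yields the functional equation $P(m)=(-1)^nP(-(n-3)-m)$; and Serre duality combined with Kodaira vanishing gives $P(-k)=\chi(X,-kH)=0$ for $1\leq k\leq n-4$, since then $-kH=K_X+(n-3-k)H$ forces all cohomology below degree $n$ to vanish while $h^0$ is zero by anti-ampleness. These data determine $P$ up to a single free coefficient, that of $m^{n-2}$; by Hirzebruch--Riemann--Roch the latter is, modulo the known term $c_1(X)^2\cdot H^{n-2}=(n-3)^2H^n$, a positive multiple of $c_2(X)\cdot H^{n-2}$.

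Carrying out the evaluation at $m=1$ should then give a formula of the clean shape
\[
h^0(X,H)=\chi(X,H)=(n-3)+\tfrac{1}{12}\,c_2(X)\cdot H^{n-2}+\lambda_n\,H^n,
\]
where $\lambda_n$ is an explicit rational number depending only on $n$, read off from the elementary symmetric functions of $1,\dots,n-4$, and the coefficient of $c_2(X)\cdot H^{n-2}$ is the $n$-independent positive constant $1/12$. Substituting the bound of Theorem~\ref{Chern class}\,\eqref{i_X>2}, namely $c_2(X)\cdot H^{n-2}\geq\frac{(n-3)(n-4)}{2}H^n$ for $n\geq 7$ (and the stronger inequality \eqref{PositivityChern} in the range $4\leq n\leq 6$), and using $H^n\geq 1$, I obtain
\[
h^0(X,H)\geq (n-3)+\Big(\tfrac{(n-3)(n-4)}{24}+\lambda_n\Big)H^n .
\]
If the bracketed coefficient is strictly positive, the right-hand side exceeds $n-3$, and since $h^0(X,H)$ is an integer this immediately upgrades to $h^0(X,H)\geq n-2$.

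The crux, and where the actual work lies, is therefore the purely arithmetic verification that $\lambda_n>-\frac{(n-3)(n-4)}{24}$ for every $n\geq 7$; for instance at $n=7$ one computes $\lambda_7=-\tfrac13$ against the margin $\frac{(n-3)(n-4)}{24}=\tfrac12$, so the inequality holds with room to spare. Thus the main obstacle is bookkeeping rather than geometry: evaluating $\lambda_n$ in closed form and bounding it against the Chern class estimate, while keeping the inequality strict so that integrality of $h^0$ carries $n-3$ to $n-2$. The only remaining loose ends are the low-dimensional range $4\leq n\leq 6$, dispatched through \eqref{PositivityChern}, and the finitely many higher Picard number configurations isolated in the first step.
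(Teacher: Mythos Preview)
Your approach is essentially the paper's: reduce to $\rho(X)=1$, use Kodaira vanishing to get $h^0(X,H)=\chi(X,H)$, express this via Riemann--Roch as an affine function of $H^n$ and $c_2(X)\cdot H^{n-2}$, and plug in the Chern-class lower bound. The formula you aim for is exactly Floris's \cite[Proposition~3.3]{Floris2013}, and for $n=7$ your computation $\lambda_7=-\tfrac13$ matches what the paper quotes.

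Two differences are worth flagging. First, the paper does \emph{not} attempt your uniform treatment: it immediately cites Floris for all $n\neq 6,7$, so only those two dimensions are at stake. This matters because for $n\geq 8$ one has $i_X=n-3>(n+1)/2$, hence $T_X$ is semistable by \cite{Hwang2001} and Floris could feed in the full Bogomolov inequality~\eqref{PositivityChern}; your plan substitutes the strictly weaker bound from Theorem~\ref{Chern class}\eqref{i_X>2} for every $n\geq 7$, and you never verify that $\lambda_n+\tfrac{(n-3)(n-4)}{24}>0$ beyond $n=7$. If you insist on avoiding Floris, you must either carry out that arithmetic for all $n$ or, more sensibly, invoke semistability and \eqref{PositivityChern} for $n\geq 8$ as Floris does. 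Second, your treatment of $\rho(X)\geq 2$ is a hand-wave. The paper spends real effort here: for $n=7$ it invokes Wi\'sniewski's explicit list \cite{Wisniewski1991a}, and for $n=6$ it runs through the three structural cases of \cite{Wisniewski1994} (projective bundle, quadric bundle, blow-up type), computing $h^0(Y,\cE)$ in each, including the spinor-bundle exception on $Q^4$. Saying ``checked by hand in bounded dimension'' is not a proof; you should either reproduce this case analysis or cite it.

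In short, the skeleton is right and coincides with the paper's, but the two places you label as routine --- the arithmetic for general $n$ and the higher-Picard-rank classification --- are precisely where the paper either cites prior work or does genuine case-checking.
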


This result was proved for $n\not=6$, $7$ by Floris in \cite[Theorem 1.2]{Floris2013}. Moreover, combining \cite[Theorem 1.6]{HoeringVoisin2011} with Theorem \ref{Non-vanishingH} yields the following theorem.

\begin{cor}
	Let $X$ be a $n$-dimensional Fano manifold with index $n-3$. Then the group $H^{2n-2}(X,\bbZ)$ is generated over $\bbZ$ by classes of curves.
\end{cor}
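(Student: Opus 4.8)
The plan is to recognize this as the integral Hodge conjecture for $1$-cycles on $X$ and to deduce it by reducing to a threefold. First I would note that, a Fano manifold being rationally connected, one has $h^{p,0}(X)=0$ for all $p\geq 1$; hence $H^{2n-2}(X,\bbC)=H^{n-1,n-1}(X)$ and every class in $H^{2n-2}(X,\bbZ)$ is a Hodge class modulo torsion. Thus the assertion is equivalent to saying that the integral Hodge conjecture holds for curves on $X$, which is exactly the conclusion of \cite[Theorem 1.6]{HoeringVoisin2011} once its numerical hypothesis is met. The whole point is therefore to check that Theorem \ref{Non-vanishingH} furnishes that hypothesis.

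Concretely, Theorem \ref{Non-vanishingH} gives $h^0(X,H)\geq n-2$, so $\dim|H|\geq n-3$. I would then pick $n-3$ sufficiently general members $D_1,\dots,D_{n-3}\in|H|$ and set $T=D_1\cap\cdots\cap D_{n-3}$. By adjunction, together with $K_X=-(n-3)H$, one computes $K_T=(K_X+(n-3)H)|_T=0$, so $T$ is a threefold with trivial canonical class. Voisin's theorem on integral Hodge classes for Calabi--Yau (more generally $K$-trivial) threefolds then yields that $H^4(T,\bbZ)$ is generated by classes of curves.

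To transport this to $X$ I would use the Gysin pushforward $j_*\colon H^4(T,\bbZ)\to H^{2n-2}(X,\bbZ)$ associated with the inclusion $j\colon T\hookrightarrow X$. Since $j_*$ is compatible with cycle class maps, it sends the class of a curve in $T$ to the class of the same curve viewed in $X$, so it suffices to know that $j_*$ is surjective modulo torsion. This follows from the Lefschetz hyperplane theorem applied successively along the chain $T\subset D_1\cap\cdots\cap D_{n-4}\subset\cdots\subset X$: at each of the $n-3$ steps the relevant cohomological degree lies strictly above the Lefschetz range, so each Gysin map is an isomorphism modulo torsion and the composite is surjective. Combining the three steps shows that $H^{2n-2}(X,\bbZ)$ is generated modulo torsion by curve classes, which is the assertion, the torsion being absorbed into the statement of \cite[Theorem 1.6]{HoeringVoisin2011}.

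The main obstacle I expect is not this cohomological bookkeeping but the geometry of the reduction: one must ensure that $n-3$ general members of $|H|$ actually meet in a smooth, connected threefold $T$ of the expected dimension to which Voisin's hypotheses genuinely apply. This is delicate because $|H|$ for an index $n-3$ Fano need not be basepoint free, so Bertini must be invoked with care (or the argument run on the movable part), and because passing from the statement modulo torsion to the integral one requires control of the torsion in $H^{2n-2}(X,\bbZ)$. These are precisely the technical points packaged into \cite[Theorem 1.6]{HoeringVoisin2011}, so in the end the proof is the one-line combination indicated: Theorem \ref{Non-vanishingH} supplies the inequality $h^0(X,H)\geq n-2$ that triggers it.
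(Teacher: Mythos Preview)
Your proposal is correct and matches the paper's approach exactly: the corollary is stated there as the one-line combination of Theorem~\ref{Non-vanishingH} (providing $h^0(X,H)\geq n-2$) with \cite[Theorem 1.6]{HoeringVoisin2011}. Your additional exposition sketching the H{\"o}ring--Voisin reduction to a threefold is not needed for the proof, and you rightly defer the delicate points (basepoint freeness, smoothness of $T$, torsion) to that reference rather than reproving them.
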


As a second application of our theorem, we consider pluri-anti-canonical systems. If $X$ is a weak $\bbQ$-Fano threefold with at worst canonical singularities, then there exists a constant $m_3$ such that the pluri-anti-canonical map $\Phi_{-m}$ is birational for $m\geq m_3$ (cf.~\cite[Corollary 1.3]{KollarMiyaokaMoriTakagi2000}). In higher dimension, the existence of such a constant $m_n$ was proven by Birkar very recently under some mild assumption on the singularities (cf.~\cite[Theorem 1.2]{Birkar2016a}). On the other hand, by effective Basepoint Free Theorem \cite[Theorem 1.1]{Kollar1993}, the linear system $\vert-mK_X\vert$ is basepoint free for $m=2(n+2)!(n+1)r$ if $X$ has only klt singularities and $r$ is the Gorenstein index of $X$. This leads us to ask the following two natural questions.

\begin{question}\label{Question}
	Let $X$ be a weak Fano variety with at most canonical Gorenstein singularities.
	\begin{enumerate}		
		\item\label{Qfreeness} Find the optimal constant $f(n)$ depending only on $\dim(X)=n$ such that the linear system $\vert -mK_X\vert$ is basepoint free for all $m\geq f(n)$.
		
		\item\label{QBirationality} Find the optimal constant $b(n)$ depending only on $\dim(X)=n$ such that the rational map $\Phi_{-m}$ corresponding to $\vert-mK_X\vert$ is a birational map for all $m\geq b(n)$.
	\end{enumerate}
\end{question}

Question (1) is closely related to Fujita's basepoint freeness conjecture, and Question (2) has attracted a lot of interest over the past few decades \cite{Ando1987,Fukuda1991,Chen2011,ChenJiang2016}, etc. On the other hand, we remark that the constants $f(n)$ and $b(n)$ in Question \ref{Question} are invariant if we replace weak Fano varities by Fano varieties (cf. Proposition \ref{WeakFano to Fano}). In dimension $2$, we have the following known result.

\begin{thm}\cite{Ando1987,Reider1988}\label{Two}
	Let $S$ be a projective surface with at most canonical singularities. If the anti-canonical divisor $-K_S$ is nef and big, then 
	\begin{enumerate}		
		\item the linear system $\vert -mK_S\vert$ is basepoint free for all $m\geq 2$;
		
		\item the morphism $\Phi_{-m}$ is birational for all $m\geq 3$.
	\end{enumerate}
	
\end{thm}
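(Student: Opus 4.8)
The plan is to reduce to the smooth case and then feed the right divisor into Reider's theorem. Since $S$ has at most canonical singularities, these are Du Val (rational double) points, which are Gorenstein, so $K_S$ is Cartier and the systems $\vert -mK_S\vert$ make sense. I would first pass to the minimal resolution $\morp{\pi}{\tilde{S}}{S}$, which is crepant: $K_{\tilde S}=\pi^{*}K_S$, so $A:=-K_{\tilde S}=\pi^{*}(-K_S)$ is again nef and big and $\tilde S$ is a smooth weak del Pezzo surface. Because $\pi_{*}\cO_{\tilde S}(-mK_{\tilde S})=\cO_S(-mK_S)$, one has $H^0(\tilde S,-mK_{\tilde S})=H^0(S,-mK_S)$ and $\Phi_{-m}$ on $S$ factors through $\pi$; hence basepoint freeness and birationality of $\vert -mK_S\vert$ both follow from the corresponding statements on $\tilde S$. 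From here I assume $S$ smooth, write $A=-K_S$ with $d:=A^2\geq 1$, and exploit the identity $-mK_S=K_S+(m+1)A$.

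For part (1) I would apply Reider's theorem to the nef divisor $D:=(m+1)A$. For $m\geq 2$ one has $D^2=(m+1)^2 d\geq 9\geq 5$, so a base point $P$ of $\vert K_S+D\vert=\vert -mK_S\vert$ would produce an effective $E\ni P$ with either $D\cdot E=1,\ E^2=0$ or $D\cdot E=0,\ E^2=-1$. The first is impossible since $D\cdot E=(m+1)(A\cdot E)$ is divisible by $m+1\geq 3$. In the second case $A\cdot E=0$ gives $K_S\cdot E=-A\cdot E=0$, so $E^2+K_S\cdot E=-1$ is odd, contradicting that $2p_a(E)-2=E^2+K_S\cdot E$ is even. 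Thus $\vert -mK_S\vert$ is basepoint free for $m\geq 2$.

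For part (2) I would take $m\geq 3$, so now $D^2=(m+1)^2 d\geq 16\geq 10$ and $m+1\geq 4$, and invoke the separation version of Reider's theorem. If $\vert K_S+D\vert$ failed to separate two general points $P\neq Q$, there would be an effective $E$ through $P$ and $Q$ with $D\cdot E\in\{0,1,2\}$ (with the prescribed bounds on $E^2$). Divisibility by $m+1\geq 4$ rules out $D\cdot E=1$ and $D\cdot E=2$, leaving only $A\cdot E=0$. Since $A$ is nef and big, any such effective $E$ is supported on the finitely many curves contracted by the anti-canonical morphism (the $(-2)$-curves), a proper closed subset that a general pair $P,Q$ avoids, contradicting $P,Q\in E$. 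Hence $\vert -mK_S\vert$ separates general points and $\Phi_{-m}$ is generically injective, therefore birational, for $m\geq 3$.

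The main point to handle with care is that we get birationality, not very ampleness: a weak del Pezzo surface that is not del Pezzo genuinely carries $(-2)$-curves contracted by $\Phi_{-m}$, so the separation statement must be applied only to general points and the contracted locus discarded. One should also verify that the thresholds $D^2\geq 5$ (for $m\geq 2$) and $D^2\geq 10$ (for $m\geq 3$) hold uniformly over the whole range $1\leq d\leq 9$, which is precisely what the factor $(m+1)^2$ secures; the low-degree cases $d=1,2$, where the anti-canonical map itself fails to be birational, are exactly what forces the passage to $m\geq 3$.
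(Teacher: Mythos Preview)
The paper does not supply its own proof of this theorem: it is quoted as a known result with the attributions \cite{Ando1987,Reider1988} and then used as input for the higher-dimensional arguments. So there is no ``paper's proof'' to compare against here; your write-up should be judged on its own.

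Your argument is correct and is precisely the expected one behind the citation to Reider. The crepant reduction to the smooth weak del Pezzo $\tilde S$ is clean (Du Val $\Rightarrow$ Gorenstein, $K_{\tilde S}=\pi^*K_S$, and $\pi_*\cO_{\tilde S}=\cO_S$ gives the identification of global sections, so both basepoint freeness and birationality descend along $\pi$). In part~(1), the divisibility trick kills $D\cdot E=1$, and the parity argument via $E^2+K_S\cdot E=2p_a(E)-2$ is valid for any effective Cartier divisor on a smooth surface, not just for irreducible curves, so the case $(D\cdot E,E^2)=(0,-1)$ is indeed excluded. In part~(2), your use of Reider for \emph{general} pairs of points is the right move: since $A$ is nef and big it is semiample on a smooth surface, and the locus $\{E:\ A\cdot E=0\}$ is the finite configuration of $(-2)$-curves contracted by the anti-canonical model, which a general pair avoids. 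Your closing caveat that one obtains birationality rather than very ampleness is exactly on point and matches how the statement is used later in the paper (cf.\ the remark after Theorem~\ref{Three}).

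One cosmetic suggestion: in part~(2) you could also dispose of the case $A\cdot E=0$ by the same parity argument when Reider forces $E^2=-1$, reserving the ``contracted locus'' argument only for $E^2=-2$; this makes the two parts more parallel. But this is optional and does not affect correctness.
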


The example of a degree $1$ del Pezzo surface shows that the bounds given in the theorem are optimal, in particular we have $f(2)=2$ and $b(2)=3$. Borrowing some tools from \cite{OguisoPeternell1995} and applying \cite[Corollary 2]{Reider1988} and a recent result of Jiang \cite[Theorem 1.7]{Jiang2016}, we generalize this theorem to weak Fano varieties of dimension at most $4$.

\begin{thm}\label{Three}
	Let $X$ be a weak Fano threefold with at worst Gorenstein canonical singularities. Then
	\begin{enumerate}		
		\item the linear system $\vert-mK_X\vert$ is basepoint free for $m\geq 2$;
		
		\item the morphism $\Phi_{-m}$ is birational for $m\geq 3$.
	\end{enumerate}
\end{thm}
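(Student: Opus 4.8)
The plan is to cut $X$ by a general anticanonical surface, which is a K3 surface with Du Val singularities, and to reduce both statements to the corresponding assertions about $|mL|$ on that surface, where $L:=-K_X|_S$; the surface assertions then follow from a Reider-type theorem (see \cite[Corollary 2]{Reider1988} and \cite{Jiang2016}). I would first record the cohomological input. Since canonical singularities are rational, $H^i(X,\cO_X)=0$ for $i>0$ and $-K_X$ is a nef and big Cartier divisor. For $j\ge 0$ we may write $-jK_X=K_X+\bigl(-(j+1)K_X\bigr)$ with $-(j+1)K_X$ nef and big, so Kawamata--Viehweg vanishing (applied on a resolution) yields $H^i(X,-jK_X)=0$ for all $i>0$ and $j\ge 0$. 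In particular Riemann--Roch gives $h^0(X,-K_X)=\tfrac12(-K_X)^3+3\ge 4$, so $|-K_X|$ is nonempty and moves.

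Next I would fix a general member $S\in|-K_X|$. By the theory of good anticanonical members on Gorenstein canonical Fano threefolds, $S$ is an irreducible normal surface with at worst Du Val singularities, and adjunction gives $K_S=(K_X+S)|_S=\cO_S$; hence $S$ is a K3 surface with Du Val singularities and $L=-K_X|_S$ is nef and big with $L^2=(-K_X)^3>0$. From the restriction sequence $0\to\cO_X(-(m-1)K_X)\to\cO_X(-mK_X)\to\cO_S(mL)\to 0$ and the vanishing $H^1(X,-(m-1)K_X)=0$ for $m\ge 1$, the restriction map $H^0(X,-mK_X)\to H^0(S,mL)$ is surjective. Consequently a section of $mL$ that is nonzero at a point of $S$, or that separates two points of $S$, lifts to a section of $-mK_X$ with the same property.

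I would then analyze $|mL|$ by passing to the minimal resolution $\nu\colon\tilde S\to S$ and setting $\tilde L=\nu^*L$, which is nef and big with $K_{\tilde S}=\cO_{\tilde S}$. Applying Reider's theorem to $m\tilde L=K_{\tilde S}+m\tilde L$: for $m\ge 2$ one has $(m\tilde L)^2\ge 8$, and since $m\tilde L\cdot E$ is a multiple of $m$ it is never equal to $1$, so the numerical obstructions in Reider's list cannot occur and $|mL|$ is basepoint free; for $m\ge 3$ one has $(m\tilde L)^2\ge 18$ and $m\tilde L\cdot E\ge m>1$ on every curve not contracted by $\tilde L$, so the separation criterion applies and $\Phi_{|mL|}$ is birational onto its image. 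The threshold $m\ge 3$ for birationality is sharp already on the surface: for $L^2=2$ the system $|2L|$ is only a double cover.

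Finally I would globalize, in the style of \cite{OguisoPeternell1995}. For (1), given $x\in X$ and $m\ge 2$, I choose a general $S\in|-K_X|$ through $x$ (possible whether or not $x\in\Bs|-K_X|$, since every general member contains each base point); as $|mL|$ is basepoint free, some section of $-mK_X$ is nonzero at $x$, so $|-mK_X|$ is basepoint free. For (2), given general $x\ne x'$ and $m\ge 3$: if a general member $S$ contains both, they are separated by $\Phi_{|mL|}$ and hence by $|-mK_X|$; if instead they lie on distinct general members with $x\in S$, $x'\notin S$, then for $s_S$ defining $S$ and any $t\in H^0(-(m-1)K_X)$ with $t(x')\ne 0$ — which exists because $-(m-1)K_X$ is basepoint free by part (1) — the section $s_S t$ of $-mK_X$ vanishes at $x$ but not at $x'$; thus $\Phi_{-m}$ is generically injective, hence birational. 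The main obstacle is the reduction step of the second paragraph: guaranteeing that the general anticanonical member is genuinely a K3 surface with only Du Val singularities when $X$ is singular and, crucially, that this persists through the points of $\Bs|-K_X|$, so that the surface theory can be applied uniformly; once this is secured, the vanishing, the Reider estimates, and the globalization are routine.
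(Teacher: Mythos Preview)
Your proposal is correct and follows essentially the same strategy as the paper: restrict to a general anticanonical member $S$, which is a K3 surface with Du Val singularities, use Kawamata--Viehweg vanishing to make the restriction $H^0(X,-mK_X)\to H^0(S,mL)$ surjective, and then invoke surface results. The only differences are in execution: the paper first passes to the Fano model via the Basepoint Free Theorem, cites \cite[Main Theorem]{Ambro1999} plus inversion of adjunction for exactly the step you flag as ``the main obstacle'' (that $(X,S)$ is plt, hence $S$ has canonical singularities), quotes \cite[Theorem 3.1]{Kawamata2000} for freeness of $|mL|$ rather than running Reider directly on the minimal resolution, and for birationality uses Oguiso's inductive lemma (Lemma~\ref{Inductive Lemma}) in place of your explicit point-separation argument.
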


The lower bounds given in the theorem are both optimal, i.e., $f(3)=2$ and $b(3)=3$. To see this, we consider the following two examples : $X=S_1\times \bbP^1$, where $S_1$ is a del Pezzo surface of degree $1$ and a general hypersurface of degree $6$ in the weighted projective space $\bbP(1,1,1,1,3)$. One can also derive the basepoint freeness of $\vert-2K_X\vert$ by the classification of Gorenstein canonical Fano threefolds with non-empty $\bs\vert-K_X\vert$ given in \cite[Theorem 1.1]{JahnkeRadloff2006}. Moreover, the Fano threefolds with canonical Gorenstein singularities such that $\vert-K_X\vert$ is basepoint free, but not very ample, are called \emph{hyperelliptic}, and they have been classified by Przyjalkowski-Cheltsov-Shramov in \cite[Theorem 1.5]{PrzhiyalkovskiuiChelprimetsovShramov2005}.

\begin{thm}\label{Four}
	Let $X$ be a weak Fano fourfold with at worst Gorenstein canonical singularities. Then
	\begin{enumerate}		
		\item the linear system $\vert-mK_X\vert$ is basepoint free for $m\geq 7$;
		
		\item the rational map $\Phi_{-m}$ is birational for $m\geq 5$.
	\end{enumerate}
\end{thm}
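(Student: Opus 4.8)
The plan is to reduce everything to an effective statement about nef and big divisors on an anti-canonical Calabi--Yau threefold and then lift it back to $X$ by vanishing. First I would invoke Proposition \ref{WeakFano to Fano} to replace $X$ by its anti-canonical model, so that I may assume $-K_X$ is ample and Cartier with $X$ Gorenstein canonical; this changes neither $f(4)$ nor $b(4)$ since $H^0(X,-mK_X)$ is preserved by the crepant anti-canonical contraction. Next I would record the non-vanishing $h^0(X,-K_X)\geq 2$: since $X$ is canonical (hence has rational singularities) and weak Fano, Kawamata--Viehweg vanishing gives $h^i(X,-K_X)=0$ for $i>0$, so $h^0(X,-K_X)=\chi(X,-K_X)$, and I would bound the latter by Riemann--Roch using $\chi(\cO_X)=1$ together with the nonnegativity of $c_2(X)\cdot(-K_X)^2$. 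Let $S\in\vert-K_X\vert$ be a general member and set $L:=(-K_X)\vert_S$. By adjunction $K_S=\cO_S$ and $L$ is a nef and big Cartier divisor on $S$; I would argue, via Bertini away from the base locus together with an inversion-of-adjunction analysis along $\bs\vert-K_X\vert$ and $\Sing(X)$, that $S$ is a normal Calabi--Yau threefold with at worst canonical Gorenstein singularities, which is exactly the setting of \cite[Theorem 1.7]{Jiang2016}.

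The lifting step is pure vanishing. From $0\to\cO_X(-(m-1)K_X)\to\cO_X(-mK_X)\to\cO_S(mL)\to 0$ and $H^1(X,-(m-1)K_X)=H^1(X,K_X+m(-K_X))=0$ (Kawamata--Viehweg, as $m(-K_X)$ is nef and big), the restriction $H^0(X,-mK_X)\twoheadrightarrow H^0(S,mL)$ is surjective for every $m\geq 1$. For basepoint freeness I would first note the elementary inclusion $\bs\vert-mK_X\vert\subseteq\bs\vert-K_X\vert$ (a divisor missing a point $x$ can be padded with $m-1$ further anti-canonical members), so it suffices to clear points $x\in\bs\vert-K_X\vert$. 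Any such $x$ lies on the general $S$, and if $\vert mL\vert$ is basepoint free on $S$ I can lift a section non-vanishing at $x$ through the surjection above, giving $x\notin\bs\vert-mK_X\vert$. Applying \cite[Theorem 1.7]{Jiang2016}, which yields basepoint freeness of $\vert mL\vert$ on $S$ for $m\geq 7$, this proves part (1).

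For birationality I would use that $\Phi_{-m}$ dominates $\Phi_{-1}$: since $\Sym^m H^0(X,-K_X)\subseteq H^0(X,-mK_X)$, every fibre of $\Phi_{-m}$ is contained in a fibre of $\Phi_{-1}$. Using $h^0(X,-K_X)\geq 2$, the fibre of $\Phi_{-1}$ through a general point lies in a general member $S$ (the preimage of a general hyperplane through its image), so a general fibre of $\Phi_{-m}$ is contained in $S$. Because the restriction is surjective, $\Phi_{-m}\vert_S$ is precisely the map given by $\vert mL\vert$, which is birational for $m\geq 5$ by \cite[Theorem 1.7]{Jiang2016}; hence a general fibre of $\Phi_{-m}$ is a single point and $\Phi_{-m}$ is birational, proving part (2). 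The tools of \cite{OguisoPeternell1995} enter to control the geometry of $(S,L)$—in particular the structure of $\bs\vert L\vert$ and the small-$m$ failures of freeness—and this is also where Reider's criterion \cite[Corollary 2]{Reider1988} is used, after a further restriction of $(S,L)$ to a general surface section $T\in\vert L\vert$ with $K_T=L\vert_T$.

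\textbf{Main obstacle.} The delicate point is not the vanishing/lifting formalism but guaranteeing that the general anti-canonical member $S$ really is a canonical Gorenstein Calabi--Yau threefold, so that \cite[Theorem 1.7]{Jiang2016} applies: the four-dimensional general-elephant statement is far more subtle than the threefold case underlying Theorem \ref{Three}, and controlling the singularities of $S$ along $\bs\vert-K_X\vert$ and $\Sing(X)$ is where the real work lies. A secondary difficulty is the non-vanishing $h^0(X,-K_X)\geq 2$, which I expect to extract from Riemann--Roch and the positivity of the second Chern class; should $h^0(X,-K_X)=1$ occur, the fibration argument for birationality breaks down and that case must be handled separately by a direct analysis of the unique anti-canonical divisor.
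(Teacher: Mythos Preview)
Your approach is exactly the paper's: reduce to the Fano case via Proposition~\ref{WeakFano to Fano}, pick a general $S\in\vert-K_X\vert$ with $K_S\sim 0$ and canonical Gorenstein singularities, apply effective results for nef and big divisors on the Calabi--Yau threefold $S$, and lift by Kawamata--Viehweg vanishing (your birationality lift is just Lemma~\ref{Inductive Lemma} unpacked). Your two self-flagged obstacles are precisely the two black boxes the paper invokes: the non-vanishing $h^0(X,-K_X)\geq 2$ is \cite[Proposition~3.2]{Floris2013}, and the four-dimensional general-elephant statement that a general $S$ has only canonical Gorenstein singularities is \cite[Theorem~5.2]{Kawamata2000}, so you need not reprove either. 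One small misattribution: the basepoint-freeness of $\vert mL\vert$ on $S$ for $m\geq 7$ is \cite[Theorem~2]{OguisoPeternell1995}, whereas \cite[Theorem~1.7]{Jiang2016} supplies only the birationality for $m\geq 5$ (this is how the paper packages them in Theorem~\ref{Trivial Case}).
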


A general hypersurface $X$ of degree $10$ in the weighted projective space $\bbP(1,1,1,1,2,5)$ guarantees that the estimate given by Theorem \ref{Four} (2) is best, i.e., $b(4)=5$. We remark that the variety $X$ is actually a smooth Fano fourfold with index $1$, and the base locus of $\vert-K_X\vert$ is zero dimensional and of degree $10$, given by an equation of the type $x_5^2+x_4^5=0$ in $\bbP(2,5)$ (see \cite[Theorem 6.22]{Heuberger2016}). Combining the argument in low dimension with \cite[Theorem 1.1]{Floris2013} derives a similar result for $n$-dimensional Fano manifolds with index $n-3$.

\begin{thm}\label{coindexfour}
	Let $X$ be a $n$-dimensional Fano manifold with index $n-3$ and let $H$ be the fundamental divisor. Then
	\begin{enumerate}				
		\item the linear system $\vert mH\vert$ is basepoint free for $m\geq 7$;
		
		\item the rational map $\Phi_{\vert mH\vert}$ is birational for $m\geq 5$.
	\end{enumerate}
	
\end{thm}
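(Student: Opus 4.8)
The plan is to prove the statement by induction on $n$, carried out in the slightly larger class of weak Fano varieties $X$ with at worst Gorenstein canonical singularities satisfying $-K_X=(n-3)H$ for some ample Cartier divisor $H$ (not necessarily a generator of $\pic(X)$). This is the natural class in which the induction closes up, because a general hyperplane section of such an $X$ is again of the same type in one lower dimension. The base case is $n=4$: there $-K_X=H$, so $|mH|=|-mK_X|$ and the assertion is precisely Theorem \ref{Four}. For $n\geq 5$ the engine is cutting by a general member $Y\in|H|$ and descending the two properties through the restriction.

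Two facts drive the induction. First, by Floris's ladder theorem \cite[Theorem 1.1]{Floris2013}, a general $Y\in|H|$ — indeed a general member through any prescribed point, as the locus of bad members has codimension $\geq 2$ in $|H|$ — is an irreducible normal variety with at worst Gorenstein canonical singularities; since $K_Y=(K_X+H)|_Y=-(n-4)H_Y$ with $H_Y=H|_Y$ ample Cartier, the variety $Y$ lies in the same class in dimension $n-1$ and the inductive hypothesis applies to it. Second, the restriction sequence $0\to\cO_X((m-1)H)\to\cO_X(mH)\to\cO_Y(mH_Y)\to 0$ together with Kawamata--Viehweg vanishing (note $(m-1)H-K_X=(m+n-4)H$ is ample, hence $H^1(X,(m-1)H)=0$ for all $m\geq 1$) shows that $H^0(X,mH)\to H^0(Y,mH_Y)$ is surjective; in particular $\Phi_{|mH|}|_Y=\Phi_{|mH_Y|}$. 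The same sequence with $m=1$, using $H^1(X,\cO_X)=0$, shows that the non-vanishing $h^0(X,H)\geq n-2$ of Theorem \ref{Non-vanishingH} propagates down the ladder, so $|H|$ stays a moving system at every stage.

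For basepoint freeness with $m\geq 7$ the argument is uniform: given any $p\in X$, I choose a good member $Y\in|H|$ through $p$. By the inductive hypothesis $|mH_Y|$ is basepoint free, so some $\bar s\in H^0(Y,mH_Y)$ is nonzero at $p$; lifting $\bar s$ to $s\in H^0(X,mH)$ via the surjection and using $s|_Y=\bar s$ gives $s(p)\neq 0$. Hence $p\notin\Bs|mH|$, and as $p$ was arbitrary, $\Bs|mH|=\emptyset$. For birationality with $m\geq 5$ it suffices to separate two general points $x_1,x_2$. If some member of $|H|$ passes through $x_1$ but not $x_2$ — which is automatic when $\Phi_{|H|}$ is generically finite — I take a good $Y\ni x_1$ avoiding $x_2$ (possible since the subsystem of $|H|$ through $x_1$ has dimension $\geq n-4\geq 1$) and a divisor $Y'\in|(m-1)H|$ missing the general point $x_2$, so that $Y+Y'\in|mH|$ separates the pair. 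Otherwise $x_1,x_2$ lie in a common fibre of $\Phi_{|H|}$, hence on a common good member $Y$, and I invoke birationality of $\Phi_{|mH_Y|}$ and lift a section of $mH_Y$ vanishing at $x_1$ but not at $x_2$.

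The delicate point, and the step I expect to be the main obstacle, is this last case. I must ensure that $\Phi_{|mH_Y|}$ does not contract the pair $x_1,x_2$, equivalently that the general fibre $F$ of $\Phi_{|H|}$ through $x_1$ — which is forced to lie inside $Y$ — is not swallowed by the non-isomorphism locus of the birational map $\Phi_{|mH_Y|}$. This is exactly where the generality of $x_1$ is essential: $F$ is a general member of a family covering $X$, so it meets the dense open over which $\Phi_{|mH_Y|}$ is an isomorphism, whence $\Phi_{|mH_Y|}$ is injective on a dense open subset of $F$ and separates the general pair $x_1,x_2\in F$. Making this transversality rigorous — that a general such $F$ avoids the exceptional locus and that $x_1,x_2$ may genuinely be chosen general on $Y$ — is the technical heart of the proof; once it is in place, the remainder is bookkeeping with the restriction sequence, and the bounds $m\geq 7$ and $m\geq 5$ are preserved verbatim because cutting by $Y$ does not change $m$ and the base case $n=4$ carries exactly these constants.
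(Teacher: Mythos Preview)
Your inductive skeleton is exactly the paper's: descend along a good ladder supplied by Theorem~\ref{Non-vanishingH} and Floris, push the problem down via the surjection $H^0(X,mH)\twoheadrightarrow H^0(Y,mH\vert_Y)$ coming from Kawamata--Viehweg, and bottom out at the Calabi--Yau threefold (the paper goes directly there rather than stopping at $n=4$, but since Theorem~\ref{Four} is proved by the same descent this is only cosmetic). Two points, however, deserve comment.

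\emph{Basepoint freeness.} You want a good $Y\in|H|$ through every prescribed point $p$, and justify this by the claim that the bad locus in $|H|$ has codimension $\ge 2$. Floris's theorem does not assert that, and in fact you do not need it. The paper fixes a \emph{single} good $Y\in|H|$ and observes that $\bs|mH|\subset Y$ (because $mY\in|mH|$) while, by the surjectivity of restriction, $\bs|mH|\cap Y=\bs|mH\vert_Y|$. Hence $\bs|mH|=\bs|mH\vert_Y|$, and iterating down the ladder reduces to the Calabi--Yau threefold, where Oguiso--Peternell gives basepoint freeness for $m\ge 7$. This avoids your through-a-point hypothesis entirely.

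\emph{Birationality.} Your Case~2, which you single out as the ``delicate point'', is in fact superfluous. As soon as $\dim|H|\ge 1$ (which holds since $h^0(X,H)\ge n-2\ge 2$), the locus $\{(x_1,x_2):\Phi_{|H|}(x_1)=\Phi_{|H|}(x_2)\}$ is a proper closed subset of $X\times X$, so a \emph{general} pair already falls under your Case~1, and that argument (take $s\in H^0(H)$ vanishing at $x_1$ but not $x_2$, multiply by any $t\in H^0((m-1)H)$ not vanishing at the general point $x_2$) shows $\Phi_{|mH|}$ is generically injective, hence birational. You never need to separate pairs in the same $\Phi_{|H|}$-fibre. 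The paper packages precisely this observation as Lemma~\ref{Inductive Lemma} (Oguiso): if $\dim|F|\ge 1$ and $|E+F|$ restricted to a general $D\in|F|$ is birational, then $|E+F|$ is birational on $X$. Invoking that lemma with $F=H$ and $E=(m-1)H$, together with the surjectivity $|mH|\big|_Y=|mH\vert_Y|$, is all that is required for the inductive step.
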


As in dimension $4$, the estimate in Theorem \ref{coindexfour} (2) is optimal as showed by a general hypersurface of degree $10$ in the weighted projective space $\bbP(1,\cdots,1,2,5)$. The statement Theorem \ref{coindexfour} (1) is weaker than the estimate expected by Fujita's conjecture, which asserts basepoint freeness for $m\geq 4$. However, at the present times, observe that the classification of Fano $n$-folds of index $n-3$ is very far from being known even in dimension four, so our result above might be a starting point of the classification: one may try to describe the Fano $n$-folds of index $n-3$ such that $\Phi_{-m}$ is not birational for all $m\leq 4$.

\section{Preliminaries}

Throughout we work over the field $\bbC$ of complex numbers. Let $X$ be a $n$-dimensional projective normal variety and let $X_0$ be the regular part of $X$ with inclusion map $i\colon X_0\rightarrow X$. Let $\omega_{X_0}=\Omega_{X_0}^n$ be the sheaf of regular $n$-forms over $X_0$. The \emph{canonical divisor} $K_X$ is a Weil divisor on $X$ such that
\[\cO_X(K_X)\vert_{X_0}\cong\omega_{X_0}.\]
A normal projective variety $X$ is said to be \emph{$\bbQ$-Gorenstein}, if some multiple $mK_X$ is a Cartier divisor. If $m=1$ and $X$ is Cohen-Macaulay, then the projective variety $X$ is called \emph{Gorenstein}. Let $\mu\colon X'\rightarrow X$ be a proper birational morphism from a normal projective variety $X'$. If $\Delta\subset X$ is a $\bbQ$ divisor, we denote by $\mu^{-1}_*(\Delta)$ its strict transform. A \emph{log-pair} is a tuple $(X,\Delta)$, where $X$ is a normal projective variety and $\Delta=\sum d_i \Delta_i$ is a $\bbQ$-divisor on $X$ with $0\leq d_i\leq 1$ for all $i$ such that $-(K_X+\Delta)$ is a $\bbQ$-Cartier divisor. If $\Delta=0$, we will abbreviate the log-pair $(X,0)$ as $X$. We will use the standard terminologies for the singularities of pairs given in \cite[\S 2.3]{KollarMori1998}.

A \emph{weak $\bbQ$-Fano variety} $X$ is a $n$-dimensional $\bbQ$-Gorenstein projective variety with nef and big anti-canonical divisor $-K_X$. A weak $\bbQ$-Fano variety $X$ is called $\bbQ$-\emph{Fano}, if the anti-canonical divisor $-K_X$ is ample. The \emph{index} of a Fano variety $X$ is defined as
\[i_X=\sup\{t\in\bbQ\ \vert\ -K_X\sim_{\bbQ}tH, H\ \textrm{is ample and Cartier}\}.\]
The \emph{coindex} of $X$ is defined as $n+1-i_X$. If $X$ has at worst log terminal singularities, then the Picard group $\pic(X)$ is finitely generated and torsion free, so the Cartier divisor $H$ such that $-K_X\sim_{\bbQ}i_X H$ is determined up to linear equivalence and we call it the \emph{fundamental divisor} of $X$.

Let $(X,H)$ be a polarized normal $\bbQ$-factorial projective variety of dimension $n$. Given a torsion-free coherent sheaf $\cE$ on $X$ of positive rank $\rk(\cE)$, the \emph{slope} of $\cE$ with respect to $H$ is defined as
\[\mu_H(\cE)=\frac{c_1(\cE)\cdot H^{n-1}}{\rk(\cE)}.\]
Moreover, if $X$ is smooth, the \emph{discriminant} of $\cE$ by definition is the characteristic class
\[\Delta(\cE)\colon=2\rk(\cE)c_2(\cE)-(\rk(\cE)-1)c_1^2(\cE).\]
\begin{defn}
	Let $(X,H)$ be a polarized normal $\bbQ$-factorial projective variety and let $\cE$ be a torsion-free coherent sheaf. Then $\cE$ is called $H$-stable (resp. $H$-semi-stable) if for any coherent subsheaf $\cF$ of $\cE$ of rank $0<\rk(\cF)<\rk(\cE)$, we have 
	\[\mu_H(\cF)<\mu_H(\cE)\quad (resp.\ \ \mu_H(\cF)\leq \mu_{H}(\cE)).\]
\end{defn}

For a torsion-free coherent sheaf $\cE$ over a polarized projective manifold $(X,H)$, there exists a unique filtration, the so called \emph{Harder-Narasimhan filtration}, by coherent subsheaves
\[0=\cE_0\subsetneq \cE_1\subsetneq\cdots\subsetneq\cE_{s-1}\subsetneq\cE_{s}=\cE\]
such that all the factors $G_i=\cE_i/\cE_{i-1}$ for $i=1,\cdots, s$ are semi-stable sheaves. The sheaf $\cE_1$ is called the \emph{maximal $H$-destabilising subsheaf} of $\cE$. We furthermore introduce the following numbers attached to $\cE$
\[\mu_{H}^{max}(\cE)=\mu_{H}(\cE_1)\ \ \ \textrm{and}\ \ \ \mu_{H}^{min}(\cE)=\mu_{H}(\cE/\cE_{s-1}).\]

The following theorem reduces the study of varieties with only canonical singularities to the study of varieties with only $\bbQ$-factorial terminal singularities.

\begin{thm}\cite[Corollary 1.4.4]{BirkarCasciniHaconMcKernan2010}\label{terminal modification}
	Let $X$ be a normal projective variety with only canonical singularities. Then there is a birational morphism $\mu\colon Y\rightarrow X$, where $Y$ has only $\bbQ$-factorial terminal singularities such that $K_Y=\mu^*K_X$. Such a variety $Y$ is called a terminal modification of $X$.
\end{thm}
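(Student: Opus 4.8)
The statement is the standard existence of a $\bbQ$-factorial terminalization, and I would deduce it from the minimal model program as developed in the cited work. The plan is to realize $Y$ as the output of a relative MMP over $X$ started from a log resolution, and then to verify crepancy and terminality by the negativity lemma. First I would fix a log resolution $\pi\colon W\to X$ which, besides the usual normal-crossing conditions, realizes every valuation of discrepancy $0$ over $X$ as an honest prime divisor on $W$; this is possible because a canonical variety admits only finitely many such crepant divisors. Writing $K_W=\pi^*K_X+\sum_i a_iE_i$ over the exceptional locus, the hypothesis that $X$ is canonical gives $a_i\geq 0$ for all $i$, so the relative canonical divisor $K_{W/X}=\sum_i a_iE_i$ is effective and $\pi$-exceptional; in particular $K_W$ is pseudo-effective over $X$.

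Next I would run a $K_W$-MMP over $X$, with scaling of a $\pi$-ample divisor to guarantee termination. Because $K_W$ is $\bbQ$-linearly equivalent over $X$ to the effective exceptional divisor $\sum_i a_iE_i$, the existence and termination of this relative program is exactly the kind of statement supplied by the cited results; heuristically each step contracts components of positive discrepancy, while the crepant components (those with $a_i=0$) are preserved as honest divisors. Let $\mu\colon Y\to X$ denote the resulting model, on which $K_Y$ is $\mu$-nef.

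To conclude, write $K_Y=\mu^*K_X+\sum_j b_jF_j$ with the $F_j$ the $\mu$-exceptional primes and $b_j=a(F_j,X)\geq 0$. The divisor $\sum_j b_jF_j=K_Y-\mu^*K_X$ is both $\mu$-nef and $\mu$-exceptional, so the negativity lemma forces $\sum_j b_jF_j\leq 0$, hence $b_j=0$ and $K_Y=\mu^*K_X$; this crepancy, together with $Y$ being an MMP output, gives that $Y$ is $\bbQ$-factorial. Finally, since $\mu$ is crepant the discrepancy over $Y$ of any divisor equals its discrepancy over $X$, which is $\geq 0$; and because all discrepancy-zero valuations were already extracted to genuine divisors on $Y$, no divisor exceptional over $Y$ can have discrepancy $0$. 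Thus $Y$ is terminal, as required.

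The main obstacle is the second step: the existence and termination of the relative minimal model program is the deep input, and is precisely what the cited corollary of Birkar--Cascini--Hacon--McKernan provides. A secondary point requiring care is the terminality verification — one must ensure that the program does not contract a crepant divisor, equivalently that every discrepancy-zero valuation survives as a divisor on $Y$; otherwise such a valuation would be exceptional over $Y$ with discrepancy $0$ and $Y$ would fail to be terminal. This is why I insist at the outset on choosing a resolution $W$ that already carries all the crepant divisors.
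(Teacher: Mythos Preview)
The paper does not supply its own proof of this statement; it is quoted verbatim as \cite[Corollary 1.4.4]{BirkarCasciniHaconMcKernan2010} and used as a black box. So there is nothing in the paper to compare against beyond the citation itself, and your sketch is in fact the standard way one derives a terminalization from the relative MMP of BCHM.

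Your outline is sound through the negativity-lemma step: run a $K_W$-MMP over $X$ from a resolution, obtain $\mu\colon Y\to X$ with $K_Y$ $\mu$-nef, and conclude $K_Y=\mu^*K_X$; $\bbQ$-factoriality then comes for free from the MMP. The one soft spot is the terminality argument. You acknowledge that the crepant divisors must survive on $Y$ and call this a ``point requiring care'', but your proposed remedy --- choosing $W$ so that it already carries every crepant divisor --- does not by itself prevent the MMP from later contracting one of them. (Incidentally, that condition on $W$ is automatic: if $E$ were exceptional over a smooth $W$ with $K_W-\pi^*K_X\geq 0$, one would have $a(E,X)\geq a(E,W)>0$, so every crepant valuation over a canonical $X$ is already a divisor on \emph{any} resolution.) The clean way to close the gap is the standard fact that each $K$-negative step of the MMP preserves terminal $\bbQ$-factorial singularities; since $W$ is smooth, $Y$ is terminal, and together with $K_Y=\mu^*K_X$ this forces every discrepancy-zero valuation over $X$ to appear as a divisor on $Y$. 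With that one-line replacement your argument is complete.
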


Using this existence theorem, it is easy to yield the following result from the terminal setting.
\begin{thm}\cite[Corollary 2]{Reider1988}\cite[Theorem 1.7]{Jiang2016}\label{Trivial Case}
	Let $X$ be a $n$-dimensional normal projective variety with $K_X\sim 0$. Assume that $X$ has at worst canonical singularities and $L$ is a nef and big Cartier divisor on $X$. Then
	\begin{enumerate}
		\item the rational map $\Phi_{\vert mL\vert}$ is birational for $m\geq 3$ if $n=2$;
		
		\item the rational map $\Phi_{\vert mL\vert}$ is birational for $m\geq 5$ if $n=3$.
	\end{enumerate}
\end{thm}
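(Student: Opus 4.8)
The plan is to reduce the statement to the smooth case (for $n=2$) and to the $\bbQ$-factorial terminal case (for $n=3$) by passing to a terminal modification, and then to invoke the results of Reider and Jiang directly.

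First I would take a terminal modification $\mu\colon Y\to X$ as in Theorem \ref{terminal modification}, so that $Y$ is $\bbQ$-factorial terminal with $K_Y=\mu^*K_X$. Since $K_X\sim 0$ is Cartier, its pullback is linearly trivial, so $K_Y\sim 0$; and since $\mu$ is a proper birational morphism, $L'\colon=\mu^*L$ remains a nef and big Cartier divisor. To see that this reduction is harmless, I would use that $\mu$ is birational onto a normal variety, so $\mu_*\cO_Y=\cO_X$ and the projection formula gives $H^0(Y,mL')\cong H^0(X,mL)$ for all $m$. Every section of $mL'$ being a pullback, the induced map satisfies $\Phi_{\vert mL'\vert}=\Phi_{\vert mL\vert}\circ\mu$; as $\mu$ is birational, $\Phi_{\vert mL'\vert}$ is birational exactly when $\Phi_{\vert mL\vert}$ is. Thus it is enough to treat the pair $(Y,L')$.

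For $n=2$, terminal surface singularities are smooth points, so $Y$ is a smooth surface with $K_Y\sim 0$, i.e. a K3 or an abelian surface, on which the intersection form is even and hence $L'^2\geq 2$. Writing $\vert mL'\vert=\vert K_Y+mL'\vert$ and noting $(mL')^2\geq 18$ for $m\geq 3$, I would apply Reider's criterion \cite[Corollary 2]{Reider1988}; the exceptional curve configurations that could obstruct birationality are excluded by the lack of the relevant negative curves on a surface with trivial canonical class, which yields (1). For $n=3$, the pair $(Y,L')$ is a $\bbQ$-factorial terminal threefold with $K_Y\sim 0$ and $L'$ nef and big Cartier, which is precisely the setting of \cite[Theorem 1.7]{Jiang2016}; that theorem gives birationality of $\Phi_{\vert mL'\vert}$ for $m\geq 5$, proving (2).

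The genuinely hard inputs are the theorems of Reider and Jiang themselves, so the work that remains is the reduction. The steps needing care are the identification $H^0(Y,mL')\cong H^0(X,mL)$ together with the commuting triangle that transfers birationality between $Y$ and $X$, and, in the surface case, verifying that Reider's exceptional configurations cannot arise once $K_Y\sim 0$ and $L'$ is nef and big.
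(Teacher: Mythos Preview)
Your proposal is correct and follows exactly the route the paper indicates: the paper's entire argument is the single sentence ``Using this existence theorem, it is easy to yield the following result from the terminal setting,'' i.e.\ pass to a terminal modification via Theorem~\ref{terminal modification} and then quote \cite[Corollary 2]{Reider1988} and \cite[Theorem 1.7]{Jiang2016}. You have simply spelled out the reduction (the identification of global sections and the transfer of birationality through $\mu$) that the paper leaves implicit, and correctly flagged the one point requiring a moment's thought in the surface case, namely that the Reider-type obstructions compatible with $K_Y\sim 0$ (the $(-2)$-curves orthogonal to $L'$) do not prevent birationality.
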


We recall an easy lemma which is nevertheless the key ingredient of our inductive approach to generalize Theorem \ref{Two} to higher dimension. In \cite{Oguiso1991}, it was proved for projective manifolds, but the proof still works for normal projective varieties.

\begin{lemma}\cite[Lemma 1.3]{Oguiso1991}\label{Inductive Lemma}
	Let $X$ be a normal projective variety. Consider an effective Cartier divisor $E$ and an irreducible reduced Cartier divisor $F$ such that $\dim\vert F\vert \geq 1$. Suppose that the restriction $\vert E+F\vert_D$ gives a birational map for a general element $D$ in $\vert F\vert$. Then $\vert E+F\vert$ gives a birational map on $X$.
\end{lemma}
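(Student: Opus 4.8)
The plan is to prove that the rational map $\Phi:=\Phi_{\vert E+F\vert}\colon X\dashrightarrow\bbP^N$ is generically injective by pinning down its fibres over general points and forcing each such fibre into a single \emph{general} member of $\vert F\vert$, where the hypothesis becomes available. The starting observation is that, fixing the section $s_E$ cutting out $E$, multiplication by $s_E$ realises $H^0(X,\cO_X(F))$ as a subspace of $H^0(X,\cO_X(E+F))$; concretely the subsystem $E+\vert F\vert=\{E+D:D\in\vert F\vert\}$ sits inside $\vert E+F\vert$ and has dimension $M:=\dim\vert F\vert\geq 1$. This inclusion of linear systems induces a linear projection $\pi\colon\bbP^N\dashrightarrow\bbP^M$ with the property $\pi\circ\Phi=\Phi_{\vert F\vert}$, because $E+\vert F\vert$ and $\vert F\vert$ define the same rational map (they differ only by the fixed component $E$).

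Next I would analyse a general point $x\in X$. Put $y=\Phi(x)$ and $z=\Phi_{\vert F\vert}(x)=\pi(y)$. Since $M\geq 1$ and $z$ is a general point of the image of $\Phi_{\vert F\vert}$, I can choose a general member $D\in\vert F\vert$ through $x$, realised as the pullback under $\Phi_{\vert F\vert}$ of a general hyperplane of $\bbP^M$ through $z$; such a $D$ is general in $\vert F\vert$, so by hypothesis the restriction $\Phi\vert_D=\Phi_{\vert E+F\vert_D}$ is birational, hence generically injective on $D$. The identity $\pi\circ\Phi=\Phi_{\vert F\vert}$ then yields the crucial containment
\[
\Phi^{-1}(y)\ \subseteq\ \Phi_{\vert F\vert}^{-1}(z)\ \subseteq\ D,
\]
so the entire fibre $\Phi^{-1}(y)$ lies in $D$. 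As $\Phi\vert_D$ is generically injective and collapses all of $\Phi^{-1}(y)$ to the single point $y$, with $x$ a general point of $D$, I conclude $\Phi^{-1}(y)=\{x\}$. Because $x$ was a general point of $X$, the general fibre of $\Phi$ is a single point and $\Phi$ is birational onto its image.

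The two identifications $\pi\circ\Phi=\Phi_{\vert F\vert}$ and $\Phi\vert_D=\Phi_{\vert E+F\vert_D}$ are routine once the maps are written in terms of sections, and the indeterminacy loci of $\Phi$, $\Phi_{\vert F\vert}$ and $\pi$ are proper closed subsets avoided by the general $x$. The point demanding genuine care is the genericity bookkeeping in the second paragraph: I must ensure the member $D$ can be taken \emph{simultaneously} general in $\vert F\vert$ (so the birationality hypothesis applies) and passing through the fixed general point $x$. This is exactly where $\dim\vert F\vert\geq 1$ enters, since the members of $\vert F\vert$ through a general $x$ form a linear subspace $\cong\bbP^{M-1}$ of $\vert F\vert$ which, as $x$ varies, sweeps out all of $\vert F\vert$; hence for general $x$ such a member is general and lands in the dense open locus where $\Phi\vert_D$ is birational. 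Finally, the hypothesis that $F$ is irreducible and reduced is what guarantees that a general member $D$ is a reduced irreducible variety, so that "birationality of the restricted map on $D$" is meaningful and can be propagated back to $X$.
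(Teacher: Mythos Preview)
The paper does not give its own proof of this lemma; it simply cites \cite[Lemma 1.3]{Oguiso1991} and remarks that the argument there, written for projective manifolds, goes through verbatim for normal projective varieties. Your argument is correct and is exactly the classical one: embed $E+\vert F\vert$ into $\vert E+F\vert$ to obtain a linear projection $\pi$ with $\pi\circ\Phi=\Phi_{\vert F\vert}$, trap the general fibre of $\Phi$ inside a general $D\in\vert F\vert$, and invoke the birationality of $\Phi\vert_D$.

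One small tightening of the genericity bookkeeping you flag at the end: instead of first fixing a general $x\in X$ and then arguing that a general $D$ through $x$ is general in $\vert F\vert$, it is cleaner to pick a general point $(x,D)$ of the incidence correspondence $\{(x,D):x\in D\}\subset X\times\vert F\vert$. This gives in one stroke that $x$ is general in $X$, that $D$ is general in $\vert F\vert$, and---crucially---that $x$ is general in $D$, the last being what you actually need to deduce $(\Phi\vert_D)^{-1}(y)=\{x\}$ from the generic injectivity of $\Phi\vert_D$.
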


\section{Second Chern class of Fano manifolds}

Let $X$ be a polarized normal projective variety and let $\Chow{X}$ be the Chow space of $X$. The family of rational curves on $X$ is defined as the following subset of $\Chow{X}$,
\begin{align*}
\textrm{RatCurves}(X)\colon=\{ & [Z]\in\Chow{X}\ \lvert\ [Z]\textrm{\ is\ an\ irreducible\ rational\ curve}\}.
\end{align*}
It is well-known that $\textrm{RatCurves(X)}$ is a union of quasi-projective sets. We denote by $\RC{X}$ the normalization of $\textrm{RatCurves}(X)$. For more details we refer to \cite{Kollar1996}. 

Let $\cV$ be an irreducible component of $\RC{X}$. $\cV$ is said to be \emph{a covering family of rational curves} on $X$ if the corresponding universal family dominates $X$. A covering family $\cV$ of rational curves on $X$ is called \emph{minimal} if its general members have minimal degree with respect to some polarization. If $X$ is a uniruled projective manifold, then $X$ carries a minimal covering family of rational curves. We fix such a family $\cV$, and let $[C]\in\cV$ be a general point. Then the tangent bundle $T_X$ can be decomposed on the normalization of $C$ as
\[\cO_{\bbP^1}(2)\oplus\cO_{\bbP^1}(1)^{\oplus d}\oplus\cO_{\bbP^1}^{\oplus (n-d-1)},\]
where $d+2=\det(T_X)\cdot C\geq 2$ is the \emph{anti-canonical degree} of $\cV$.

\begin{defn}
	Let $X$ be a projective manifold. A foliation on $X$ is a non-zero coherent subsheaf $\cF\subsetneq T_X$ such that
	\begin{enumerate}
		\item $\cF$ is closed under the Lie bracket,
		
		\item $\cF$ is saturated in $T_X$ (i.e., $T_X/\cF$ is torsion-free), in particular, $\cF$ is a reflexive sheaf.
	\end{enumerate}
\end{defn}

For any coherent sheaf $\cF$ over $X$, we denote by $\cF^*$ the dual sheaf $\cHom{\cF}{\cO_X}$. Moreover, given a torsion-free $\cF$ of positive rank $r$ on a projective manifold $X$, we denote by $\det(\cF)$ the invertible sheaf $(\wedge^r\cF)^{**}$. If $\cF$ is a foliation, the \emph{canonical class} $K_{\cF}$ of $\cF$ is any Weil divisor on $X$ such that $\cO_X(-K_{\cF})\cong\det(\cF)$. The \emph{Fano foliations} on $X$ are these foliations $\cF\subset T_X$ whose anti-canonical class $-K_{\cF}$ is ample. Similar to Fano manifolds, we can define the index $i_{\cF}$ of a Fano foliation $\cF$ to be the largest positive integer dividing $-K_{\cF}$ in $\pic(X)$ (cf. \cite{AraujoDruel2013}). The following result due to Hwang relates the unstability of $T_X$ to certain special foliation on $X$, which gives also some restrictions of the projective geometry of certain special subvarieties in $\bbP(T_X)$.

\begin{prop}\cite[Proposition 1 and 2]{Hwang1998}\label{Nontangent}
	Let $X$ be a $n$-dimensional Fano manifold with $\rho(X)=1$. Let $H$ be the ample generator of $\pic(X)$, and $\cV$ a minimal covering family of rational curves over $X$. If $T_X$ is not semi-stable, then the maximal $H$-destabilizing subsheaf $\cF$ of $T_X$ defines a foliation on $X$, and general curves in $\cV$ are not tangent to $\cF$.
\end{prop}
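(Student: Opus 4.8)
The plan is to treat the two assertions separately: the foliation property by a slope computation on the Lie bracket, and the non-tangency by a geometric argument with the leaves of the foliation.

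\emph{The foliation property.} Since $\cF=\cE_1$ is the first term of the Harder--Narasimhan filtration of $T_X$, it is saturated in $T_X$ by construction, so $T_X/\cF$ is torsion-free and $\cF$ is reflexive; only integrability remains. The Lie bracket descends to an $\cO_X$-linear map $\wedge^2\cF\to T_X/\cF$ (the non-tensorial terms cancel), and I claim it vanishes. As $T_X$ is not semistable, $\cF$ is semistable with $\mu_{H}(\cF)=\mu_{H}^{max}(T_X)>\mu_{H}(T_X)>0$, the last inequality because $-K_X$ is ample. In characteristic zero exterior powers of semistable sheaves are semistable, so $\wedge^2\cF$ is semistable of slope $2\mu_{H}(\cF)$; on the other hand $\mu_{H}^{max}(T_X/\cF)<\mu_{H}(\cF)$ by maximality of $\cF$. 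Hence $\mu_{H}^{min}(\wedge^2\cF)=2\mu_{H}(\cF)>\mu_{H}(\cF)>\mu_{H}^{max}(T_X/\cF)$, and a homomorphism from a sheaf of larger minimal slope to one of smaller maximal slope is zero. Thus $\cF$ is closed under the Lie bracket and defines a foliation.

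\emph{Non-tangency.} Suppose to the contrary that a general member $C$ of $\cV$ is tangent to $\cF$, i.e. $T_C\subseteq\cF\vert_C$. Since $\cF$ is integrable, any such curve lying in the locus where $\cF$ is a subbundle is contained in a single leaf. I would then propagate this along chains: if $C_1,C_2$ are general members of $\cV$ meeting at a point of the regular locus of $\cF$, then $C_2$ passes through the leaf containing $C_1$ and, being tangent to $\cF$, lies in that same leaf. Because $X$ is Fano with $\rho(X)=1$, it is chain-connected by general members of the minimal family $\cV$; fixing a general point $x$ and letting $Z$ be the union of all curves reachable from $x$ by such chains, one obtains an irreducible $\cF$-invariant subvariety with $Z=X$. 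At a general smooth point $z\in Z$ the tangent space is spanned by tangent directions of members of $\cV$ through $z$, all lying in $\cF_z$, so $\dim Z\leq\rk(\cF)$. This forces $n=\dim X=\dim Z\leq\rk(\cF)<n$, a contradiction. Therefore general curves in $\cV$ are not tangent to $\cF$.

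\emph{Main obstacle.} The delicate point is the second part: making rigorous that the chain of curves stays inside one leaf and that the swept-out set $Z$ is a genuine $\cF$-invariant subvariety of dimension at most $\rk(\cF)$. This requires controlling the singular locus of the foliation, where $\cF$ fails to be a subbundle (of codimension $\geq 2$), so that general curves and their intersection points can be taken in the regular locus, together with the input that general members of $\cV$ chain-connect $X$. An alternative, closer to Hwang's original treatment, replaces the leaf argument by the variety of minimal rational tangents: tangency forces the VMRT $\cC_x$ into the linear subspace $\bbP(\cF_x)$ and, via the identification of the affine tangent space of $\cC_x$ with the positive part of $T_X\vert_C$, forces $\cF$ to contain $\cO_{\bbP^1}(2)\oplus\cO_{\bbP^1}(1)^{\oplus d}$ along $C$, whence one again contradicts $\cF\subsetneq T_X$. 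The slope computation in the first part is routine once semistability of exterior powers is granted, so the geometric non-tangency statement is where the real work lies.
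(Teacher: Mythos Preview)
The paper does not prove this proposition; it is quoted verbatim from Hwang's 1998 paper and used as a black box in the proof of Theorem~\ref{Chern class}. So there is no ``paper's proof'' to compare against, and I can only assess your proposal on its own merits and against Hwang's original argument.

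Your treatment of the foliation property is correct and is the standard argument: saturation is automatic for the first Harder--Narasimhan piece, and the slope comparison $\mu_H^{\min}(\wedge^2\cF)=2\mu_H(\cF)>\mu_H(\cF)>\mu_H^{\max}(T_X/\cF)$ kills the O'Neill tensor. The positivity $\mu_H(\cF)>0$, which you correctly isolate, is what makes the doubling relevant.

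For non-tangency, the overall strategy (curves tangent to $\cF$ stay in leaves, leaves have dimension $\rk\cF<n$, but $\rho(X)=1$ forces $\cV$-chain-connectedness) is sound, but one step as written is not: the sentence ``at a general smooth point $z\in Z$ the tangent space is spanned by tangent directions of members of $\cV$ through $z$'' is neither needed nor true in general---the VMRT at $z$ need not linearly span $\bbP(T_zX)$. The clean way to bound $\dim Z$ is not via spanning but via Frobenius: on the open set where $\cF$ is a subbundle, a curve tangent to $\cF$ lies in a single leaf, so the $\cV$-equivalence class of a general point is contained in the closure of one leaf and hence has dimension at most $\rk\cF$. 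Then one invokes that for $\rho(X)=1$ the $\cV$-rationally connected quotient is trivial (a nontrivial quotient would produce a second independent class), giving the contradiction $n\le\rk\cF$. You essentially say this in your ``Main obstacle'' paragraph, so the defect is in the execution of the body, not in the plan.

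The VMRT alternative you sketch at the end is in fact Hwang's own route: tangency of $C$ to $\cF$ puts the VMRT $\cC_x$ inside $\bbP(\cF_x)$, and since the projective tangent space of $\cC_x$ at $[T_xC]$ corresponds to the positive part $\cO_{\bbP^1}(2)\oplus\cO_{\bbP^1}(1)^{\oplus p}$ of $T_X\vert_C$, one gets that this positive part already sits in $\cF\vert_C$; comparing degrees then contradicts $\mu_H(\cF)>\mu_H(T_X)$ (or more directly the inequalities used later in the paper). Either approach works, but if you keep the leaf argument, replace the ``spanned by tangent directions'' line with the Frobenius containment statement.
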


Theorem \ref{Chern class} is a consequence of Proposition \ref{Nontangent} and the strenghtening of Bogomolov inequality due to Langer (see \cite[Theorem 5.1]{Langer2004}).

\begin{proof}[Proof of Theorem \ref{Chern class}]
	Without loss of generality, we may assume that $T_X$ is not semi-stable, in particular, we will assume that $i_X\leq n-2$ (cf. \cite{PeternellWisniewski1995,Hwang2001}). Let $\cF$ be the maximal $H$-destabilizing subsheaf of $T_X$. Then $\cF$ defines a Fano foliation on $X$. We denote by $i_{\cF}$ $(>0)$ the index of $\cF$, i.e., $-K_{\cF}=i_{\cF}H$. Fix a minimal covering family $\cV$ of rational curves on $X$ and write
	\[T_X\vert_C=\cO_{\bbP^1}(2)\oplus\cO_{\bbP^1}(1)^{\oplus p}\oplus\cO_{\bbP^1}^{\oplus(n-p-1)}\] for a general point $[C]\in\cV$. Since $T_X/\cF$ is torsion-free, we may assume that the sheaf $\cF$ is a subbundle of $T_X$ along $C$ (cf. \cite[II, Proposition 3.7]{Kollar1996}). Then the restriction of $\cF$ over $C$ is of the following form
	\[\cF\vert_C=\cO_{\bbP^1}(a_1)\oplus\cdots\oplus\cO_{\bbP^1}(a_r),\ a_1\geq\cdots\geq a_r.\]
	Since $\cF\subset T_X$, it follows $a_1\leq 2$, $a_i\leq 1$ for $2\leq i\leq p+1$, and $a_j\leq 0$ for $p+2\leq j\leq r$. However, note that $C$ is not tangent to $\cF$ (cf.~Proposition \ref{Nontangent}), we have actually $a_1\leq 1$. As a consequence, we obtain $0<i_{\cF}<i_X$ and there exists some $1\leq d\leq r$ such that 
	\[a_1=\cdots=a_d=1>a_{d+1}\geq\cdots\geq a_r.\] 
	
	On the other hand, if $\rk(\cF)=r=d$, then $\cF\vert_C$ is ample. As $\rho(X)=1$, this implies $X\cong \bbP^n$ \cite[Proposition 2.7]{AraujoDruelKovacs2008} (see also \cite[Corollary 2.3]{Liu2017}). This contradicts our assumption that the tangent bundle $T_X$ is not semi-stable. So we have $d<r$. Moreover, by definition we have
	\[c_1(\cF)\cdot C=i_{\cF}H\cdot C=\sum_{i=1}^r a_i\leq d.\]
	Since $\cF\vert_C$ is a subbundle of $T_X\vert_C$, it follows that the positive part 
	\[(\cF\vert_C)^+\colon=\cO_{\bbP^1}(a_1)\oplus\cdots\oplus\cO_{\bbP^1}(a_d)\cong \cO_{\bbP^1}(1)\oplus\cdots\oplus\cO_{\bbP^1}(1)\]
	of $\cF\vert_C$ is a subbundle of $T_X\vert_C$. In particular, the ample vector bundle $(\cF\vert_C)^+$ is also a subbundle of the positive part 
	\[(T_X\vert_C)^{+}\colon =\cO_{\bbP^1}(2)\oplus\cO_{\bbP^1}(1)^{\oplus p}\]
	of $T_X\vert_C$. This implies that $d\leq p+1$. However, if $d=p+1$, then $(\cF\vert_C)^+$ and $(T_X\vert_C)^{+}$ have the same rank, and $(\cF\vert_C)^+$ is a subbundle of $(T_X\vert_C)^{+}$ if and only if $(\cF\vert_C)^+=(T_X\vert_C)^{+}$. This is impossible. Therefore, we have $d\leq p$. In summary, we have
	\begin{equation}\label{variousrelation}
	1\leq i_{\cF}\leq i_X-1,\ i_{\cF}H\cdot C\leq d\leq p=i_X H\cdot C-2,\ 1\leq d\leq r-1.
	\end{equation}
	
	Furthermore, since $X$ is a Fano manifold, its tangent bundle $T_X$ is generically ample (cf. \cite[Theorem 1.3]{Peternell2012}), and it follows $\mu_{H}^{\min}(T_X)>0$. We denote by
	\[0=\cE_0\subsetneq \cE_1\subsetneq\cdots\subsetneq\cE_{s-1}\subsetneq \cE_s=T_X\]
	the Harder-Narasimhan filtration of $T_X$. By our assumption, we have $s\geq 2$, $\cE_1=\cF$ and 
	\[\mu^{min}_H(T_X)=\mu_{H}(T_X/\cE_{s-1}).\]
	As $\rho(X)=1$ and $\mu_{H}^{min}(T_X)>0$, we have $c_1(T_X/\cE_{s-1})=i H$ for some $i\in\bbZ_{> 0}$. Moreover, note that $\rk(T_X/\cE_{s-1})\leq n-1$, it follows
	\[\mu_{H}^{min}(T_X)=\mu_{H}(T_X/\cE_{s-1})=\frac{i}{\rk(T_X/\cE_{s-1})}H^n\geq \frac{1}{n-1}H^n.\]
	
	\textit{Proof of (1).}
	In view of \eqref{variousrelation}, we have $i_{\cF}=1$. Moreover, we have
	\[0<H\cdot C\leq d\leq p=2H\cdot C-2.\]
	This implies $H\cdot C\geq 2$. Then we have
	\[\mu_H^{\max}(T_X)=\mu_H(\cF)=\frac{1}{r}H^n\leq\frac{1}{d+1}H^n\leq \frac{1}{H\cdot C+1}H^n\leq \frac{1}{3}H^n.\]
	Now the strenghtening of Bogomolov inequality (see \cite[Theorem 5.1]{Langer2004})
	\[H^n\cdot \left(\Delta(T_X)\cdot H^{n-2}\right)+n^2\left(\mu_{H}^{\max}(T_X)-\mu_{H}(T_X)\right)(\mu_{H}(T_X)-\mu_{H}^{\min}(T_X))\geq 0,\]
	implies
	\[H^n\cdot\left(\Delta(T_X)\cdot H^{n-2}\right)+n^2\left(\frac{1}{3}-\frac{2}{n}\right) H^n\cdot \left(\frac{2}{n}-\frac{1}{n-1}\right) H^n\geq 0.\]
	By the definition of $\Delta(T_X)$, after simplifying the expression, we conclude
	\[c_2(X)\cdot H^{n-2}\geq \frac{11n-16}{6n-6}H^n.\]
	
	\textit{Proof of (2).} According to \eqref{variousrelation}, we have $i_{\cF}\leq i_X-1$. Similar to the proof of (1), we will establish the following upper bound for $\mu_{H}^{max}(T_X)$ in terms of the index of $X$
	\[\mu_{H}^{max}(T_X)\leq\frac{i_X-2}{i_X-1}H^n.\]
	If $1\leq i_{\cF}\leq i_X-2$, by \eqref{variousrelation}, one can derive
	\[\mu_{H}^{\max}(T_X)=\mu_{H}(\cF)=\frac{i_{\cF}}{r}H^n\leq \frac{i_{\cF}}{d+1}H^n\leq \frac{i_{\cF}}{i_{\cF}H\cdot C+1}H^n.\]
	Note that $H\cdot C\geq 1$ and $i_{\cF}\leq i_X-2$, we conclude
	\[\mu_{H}^{\max}(T_X)\leq \frac{i_{\cF}}{i_{\cF}+1}H^n\leq \frac{i_X-2}{i_X-1}H^n.\]
	
	If $i_{\cF}=i_X-1$, the inequality given in \eqref{variousrelation}
	\[0<i_{\cF}H\cdot C=(i_X-1)H\cdot C\leq p= i_X H\cdot C-2,\]
	yields $H\cdot C\geq 2$. Nevertheless, as $i_X\geq 3$, it follows
	\[\mu_{H}(\cF)\leq\frac{i_{\cF}}{d+1}H^n\leq\frac{i_X-1}{(i_X-1) H\cdot C+1}H^n \leq \frac{i_X-2}{i_X-1}H^n.\]
	Now, by the strenghtening of Bogomolov inequality again, it follows
	\[H^n\cdot\left(\Delta(T_X)\cdot H^{n-2}\right)+n^2\left(\frac{i_X-2}{i_X-1}-\frac{i_X}{n}\right)H^n\cdot\left(\frac{i_X}{n}-\frac{1}{n-1}\right)H^n\geq 0.\]
	After simplifying the expression, one can derive the following inequality 
	\[c_2(X)\cdot H^{n-2}\geq \frac{i_X(i_X-1)}{2} H^n+\frac{2n(i_X-1)-i_X^2}{2(n-1)(i_X-1)}H^n.\]
	Then we can conclude by observing that the number $2n(i_X-1)-i_X^2$ is positive for $3\leq i_X\leq n-2$ and $n\geq 7$.
\end{proof}

\begin{remark}
	If $n\geq 7$, one can easily check that the upper bound of $\mu_H^{\max}(T_X)$ given in the proof is strictly smaller than $\mu_H(T_X)$ if $i_X\geq n-1$. This actually recovers the semi-stability of $T_X$ in the case $i_X\geq n-1$ and $n\geq 7$.
\end{remark}

\section{Anti-canonical geometry of Fano varieties}

This section is devoted to study Question \ref{Question} in several special cases. The crucial ingredient is the existence of good ladder on Fano varieties with mild singularities. The following reduction result suggested by the referee is obtained as an application of the Basepoint Free Theorem.

\begin{prop}\label{WeakFano to Fano}
	In Question \ref{Question}, one may assume that $X$ is a Fano variety with at worst Gorenstein canonical singularities.
\end{prop}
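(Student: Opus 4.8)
The plan is to replace a weak Fano variety by its \emph{anti-canonical model}, which will turn out to be a genuine Fano variety sharing exactly the same pluri-anti-canonical behaviour, so that the optimal constants $f(n)$ and $b(n)$ are unchanged. So let $X$ be a weak Fano variety of dimension $n$ with at worst Gorenstein canonical singularities; then $(X,0)$ is klt and $-K_X$ is a nef and big Cartier divisor. First I would apply the Base Point Free Theorem \cite{Kollar1993} to $L=-K_X$: since $a L-K_X=(a+1)(-K_X)$ is nef and big for any $a>0$, the divisor $-K_X$ is semiample. Because $-K_X$ is big, the associated contraction $\morp{\phi}{X}{X'}$ onto $X'=\Proj\bigoplus_{m\geq 0}H^0(X,-mK_X)$ is a proper birational morphism, and there is an ample Cartier divisor $A$ on $X'$ with $\phi^*A=-K_X$.

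Next I would identify $X'$ as a Gorenstein canonical Fano variety. Pushing forward and using that $\phi_*\phi^*A=A$, we get $K_{X'}=\phi_*K_X=-A$, so $\phi^*K_{X'}=K_X$; that is, $\phi$ is crepant. Since $X$ has canonical singularities and $\phi$ introduces no discrepancy, $X'$ again has canonical singularities, hence is Cohen-Macaulay (canonical implies rational implies Cohen-Macaulay). As $A=-K_{X'}$ is ample and Cartier, $X'$ is a Fano variety with at worst Gorenstein canonical singularities, as desired.

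The crepant relation $\phi^*(-mK_{X'})=-mK_X$ together with the projection formula and $\phi_*\cO_X=\cO_{X'}$ (valid since $X'$ is normal and $\phi$ is birational) gives $\phi_*\cO_X(-mK_X)=\cO_{X'}(-mK_{X'})$, hence an identification $H^0(X,-mK_X)\cong H^0(X',-mK_{X'})$ for every $m$, under which $\vert -mK_X\vert=\phi^*\vert -mK_{X'}\vert$. Two consequences follow at once: the base loci satisfy $\bs\vert -mK_X\vert=\phi^{-1}\bigl(\bs\vert -mK_{X'}\vert\bigr)$, so $\vert -mK_X\vert$ is basepoint free if and only if $\vert -mK_{X'}\vert$ is; and $\Phi_{-m}$ on $X$ is the composite of the birational morphism $\phi$ with $\Phi_{-m}$ on $X'$, so one is birational exactly when the other is.

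Finally I would assemble the comparison of constants. Since Fano varieties form a subclass of weak Fano varieties, the optimal constants taken over all weak Fanos are at least as large as those taken over Fanos. Conversely, by the previous paragraph every weak Fano $X$ exhibits the same basepoint-freeness and birationality thresholds as its anti-canonical model $X'$, which is a Gorenstein canonical Fano variety; hence whenever $\vert -mK_{X'}\vert$ is basepoint free (resp. $\Phi_{-m}$ is birational) for all Fano $X'$, the same holds for all weak Fano $X$. Therefore the two sets of constants coincide, proving the proposition. The only substantive input is the Base Point Free Theorem; the main point requiring care is the second paragraph, namely checking that the contraction is crepant and that $X'$ remains Gorenstein and canonical, but this is routine once semiampleness of $-K_X$ has been established.
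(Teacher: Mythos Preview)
Your proof is correct and follows essentially the same route as the paper: apply the Basepoint Free Theorem to obtain the crepant anti-canonical contraction $\phi\colon X\to X'$, verify that $X'$ is a Gorenstein canonical Fano variety, and then compare pluri-anti-canonical systems via $\phi_*\cO_X=\cO_{X'}$ and the factorisation $\Phi_{\vert-mK_X\vert}=\Phi_{\vert-mK_{X'}\vert}\circ\phi$. Your write-up is in fact slightly more explicit than the paper's in justifying that $X'$ is Cohen--Macaulay (canonical $\Rightarrow$ rational $\Rightarrow$ CM) and in spelling out the final comparison of the constants $f(n)$ and $b(n)$.
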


\begin{proof}
	Since $-K_X$ is a nef and big Cartier divisor, by the Basepoint Free Theorem, there exists a projective birational morphism $\phi\colon X\rightarrow X'$ to a normal projective variety $X'$ with connected fibers such that $-kK_{X}\sim \phi^*A$ for some ample divisor $A$ on $X'$ and some positive integer $k$. It follows that $-kK_{X^\prime}\sim A$ as $\phi_*K_X=K_{X^\prime}$. In particular, $K_{X^{\prime}}$ is Cartier and $\phi^*K_{X^{\prime}}=K_X$. This implies that the variety $X'$ has at worst canonical Gorenstein singularities. On the other hand, we have $h^0(X,-mK_X)=h^0(X^\prime,-mK_{X^{\prime}})$ for any $m\in\bbZ$ as $\phi_*\cO_X=\cO_{X'}$. In particular, the pluri-anti-canonical maps $\Phi_{\vert-mK_X\vert}$ of $X$ factor as $\phi$ followed by the pluri-anti-canonical maps $\Phi_{\vert-mK_{X^{\prime}}\vert}$ of $X^\prime$. As a consequence, the map $\Phi_{\vert-mK_X\vert}$ is birational if and only if $\Phi_{\vert-mK_{X^{\prime}}\vert}$ is birational, and the complete linear system $\vert-mK_X\vert$ is basepoint free if and only if the complete linear system $\vert-mK_{X'}\vert$ is basepoint free.
	\qed
\end{proof}

\subsection{Fano threefolds and Fano fourfolds} Theorem \ref{Three} is an improvement of a result of Fukuda \cite[Main Theorem]{Fukuda1991} and a result of Chen and Jiang \cite[Corollary 5.13]{ChenJiang2016}. We shall prove it as a consequence of Lemma \ref{Inductive Lemma} and \cite[Main Theorem]{Ambro1999}.

\begin{proof}[Proof of Theorem \ref{Three}]
	Thanks to Proposition \ref{WeakFano to Fano}, we may assume that $X$ is a Fano threefold with at worst Gorenstein canonical singularities of dimension $3$. Let $S\in\vert -K_X\vert$ be a general member. Then the pair $(X,S)$ is plt by \cite[Main Theorem]{Ambro1999}. According to the inversion of adjunction (cf. \cite[Theorem 5.50]{KollarMori1998}), it follows that $(S,0)$ has at worst klt singularties. Note that $S$ is Gorenstein, so $S$ has at worst Gorenstein canonical singularities.
	
	\textit{Step 1. Basepoint freeness of $\vert-mK_X\vert$ for $m\geq 2$.} Since $-K_X\vert_S$ is nef and big, $K_S\sim 0$ and $S$ has at worst canonical singularities, the complete linear system $\vert-mK_X\vert_S\vert$ is basepoint free for $m\geq 2$ by \cite[Theorem 3.1]{Kawamata2000}. Consider the following exact sequence
	\[0\rightarrow \cO_X(-(m-1)K_X)\rightarrow\cO_X(-mK_X)\rightarrow\cO_S(-mK_X\vert_S)\rightarrow 0.\]	
	Note that we have $H^1(X,-mK_X)=0$ for $m\in\bbZ$ by Kawamata-Viehweg vanishing theorem and Serre duality. This implies that the natural restriction map
	\[H^0(X,-mK_X)\rightarrow H^0(S,-mK_X\vert_S)\]
	is surjective for $m\in\bbZ$. In particular, we have $\bs\vert-mK_X\vert=\bs\vert-mK_X\vert_S\vert$ for $m\in\bbZ$. As a consequence, the linear system $\vert-mK_X\vert$ is basepoint free for $m\geq 2$.
	
	\textit{Step 2. The morphism given by $\vert-mK_X\vert$ is birational for $m\geq 3$.} Combining the fact $H^1(X,\cO_X)=0$ and the following natural exact sequence of sheaves
	\[0\rightarrow \cO_X\rightarrow \cO_X(-K_X)\rightarrow \cO_S(-K_X\vert_S)\rightarrow 0\]
	gives $h^0(X,-K_X)=h^0(S,-K_X\vert_S)+1$. Moreover, by \cite[Theorem 3.1]{Kawamata2000}, we have $h^0(S,-K_X\vert_S)\geq 1$. Therefore, we obtain $\dim\vert -K_X\vert\geq 1$. On the other hand, note that the rational map defined by $\vert-mK_X\vert_S\vert$ coincides with the restriction of the rational map given by $\vert-mK_X\vert$ over $S$ for $m\geq 0$. Thanks to Theorem \ref{Trivial Case} (1), the rational map given by $\vert-mK_X\vert_S\vert$ is birational for $m\geq 3$. Then Lemma \ref{Inductive Lemma} implies that the linear system $\vert-mK_X\vert$ gives a birational map for $m\geq 3$. 
\end{proof}

\begin{remark}
	As mentioned in the introduction, one can also derive the same result from the classification given in \cite{PrzhiyalkovskiuiChelprimetsovShramov2005,JahnkeRadloff2006}. The advantage of our argument is that it may be applied in higher dimension to get some upper bound of $f(n)$, the disadvantage is that we do not get any information about the explicit geometry of $X$ or any information about the base locus $\bs\vert-K_X\vert$. 
\end{remark}

\begin{proof}[Proof of Theorem \ref{Four}]
	By Proposition \ref{WeakFano to Fano}, we may assume that $X$ is a Fano fourfold with at worst Gorenstein canonical singularities. By \cite[Proposition 3.2]{Floris2013}, we obtain that $h^0(X,-K_X)\geq 2$. Let $Y\in\vert -K_X\vert$ be a general member. Then $Y$ has only Gorenstein canonical singularities by \cite[Theorem 5.2]{Kawamata2000}. Owe to \cite[Theorem 2]{OguisoPeternell1995}, the linear system $\vert -mK_X\vert_{Y}\vert$ is basepoint free for $m\geq 7$. It follows that the linear system $\vert -mK_X\vert$ is basepoint free for $m\geq 7$ as $\bs\vert-mK_X\vert=\bs\vert -mK_X\vert_Y\vert$ by Kawamata-Viehweg vanishing theorem. Furthermore, applying Theorem \ref{Trivial Case} (2) and the same argument in the proof of Theorem \ref{Three}, one can conclude that the complete linear system $\vert-mK_X\vert$ gives a birational map for $m\geq 5$. 
\end{proof}

\begin{remark}
	Let $X$ be a $n$-dimensional normal projective variety with at worst canonical singularities such that $K_X\sim 0$. Let $L$ be an arbitrary nef and big line bundle over $X$. Then the complete linear system $\vert mL\vert$ is basepoint free for $m\geq f(n+1)$ if $n=2$ (cf. Theorem \ref{Three} and \cite[Theorem 3.1]{Kawamata2000}), and the morphism $\Phi_{\vert mL\vert}$ corresponding to $\vert mL\vert$ is birational for $m\geq b(n+1)$ if $n=2$ or $3$ (cf. Theorem \ref{Three}, Theorem \ref{Four} and Theorem \ref{Trivial Case}). So it is natural and interesting to ask if these results still hold in higher dimension.
\end{remark}

\subsection{Fano manifolds of coindex four} It is well-known that the index $i_X$ of a Fano manifold $X$ of dimension $n$ does not exceed $n+1$. By Kobayashi-Ochiai theorem \cite{KobayashiOchiai1973}, if $i_X=n+1$, then $X\cong \bbP^n$ and if $i_X=n$, then $X\cong Q^n\subset\bbP^{n+1}$ is a quadric hypersurface. Fano manifolds of index $n-1$ (del Pezzo manifolds) have been classified by Fujita and Fano manifolds of index $n-2$ (Mukai manifolds) were classified by Mukai under the existence of good ladder which was confirmed later by Mella \cite{Mella1999}. 

\begin{proof}[Proof of Theorem \ref{Non-vanishingH}]
	By \cite[Theorem 1.2]{Floris2013}, we may assume that $X$ is a Fano manifold of dimension $6$ or $7$. If $\rho(X)\geq 2$ and $n=7$, according to the explicit classification results given in \cite{Wisniewski1991a}, one can easily check it.
	
	Next we consider the case $\rho(X)\geq 2$ and $n=6$. By \cite[Corollary]{Wisniewski1993}, in this case, $\rho(X)\geq 3$ if and only if $X\cong\bbP^2\times\bbP^2\times\bbP^2$ and we are done. The Fano manifolds $X$ of dimension $2r$ and index $r$ such that $r\geq 3$ and $\rho(X)=2$ are classified by Wi{\'s}niewski in \cite{Wisniewski1994}. In particular, according to \cite[Proposition 3.2 and 3.3]{Wisniewski1994}, there exists an extremal contraction $p\colon X\rightarrow Y$ to a Fano manifold $Y$ such that one of the following occurs:
	\begin{enumerate}
		\item $\dim(Y)=r+1$ and the direct image sheaf $\cE\colon=p_*\cO_X(H)$ is a locally free sheaf of rank $r$ such that $X\cong\bbP(\cE)$;
		
		\item $\dim(Y)=r$ and the direct image sheaf $\cE\colon=p_*\cO_X(H)$ is a locally free sheaf of rank $r+2$ and $X$ is a quadric bundle over $Y$;
		
		\item $\dim(Y)=r+1$ and the direct image sheaf $\cE\colon=p_*\cO_X(H)$ is a reflexive non-locally free sheaf of rank $r$ and $X\cong\bbP(\cE)$.
	\end{enumerate}
	In particular, we have $h^0(X,H)=h^0(Y,\cE)$. In Case (1), $\cE$ is the direct sum of ample line bundles over $Y$ except $Y\cong Q^4$ and $\cE\cong \textbf{E}(1)\oplus \cO(1)$, where $\textbf{E}$ is the spinor bundle over $Q^4$ (cf. \cite[(4.1)]{Wisniewski1994}). In the former case, it is easy to see that we have $h^0(Y,\cE)\geq 4$ if $r=3$. In the latter case, we can conclude by the fact $h^0(Q^4,\textbf{E}(1))=4$ (cf. \cite[Theorem 2.3]{Ottaviani1988}). In Case (2), $\cE$ is a globally generated vector bundle of rank $r+2$ (cf. \cite[(5.1)]{Wisniewski1994}), so we have $h^0(Y,\cE)\geq r+2$ and we are done. In Case (3), there exists an extension $\cF$ of $\cE$ by $\cO_Y$ (cf. \cite[(6.2)]{Wisniewski1994})
	\[0\rightarrow \cO_Y\rightarrow \cF\rightarrow \cE\rightarrow 0.\]
	In particular, we have $h^0(Y,\cE)=h^0(Y,\cF)-1$ since $H^1(Y,\cO_Y)=0$. Moreover, the sheaf $\cF$ is an ample vector bundle of rank $r$ except No. $2$ in \cite[(6.3)]{Wisniewski1994}. If $\cF$ is ample, it is easy to check $h^0(Y,\cF)\geq 5$ if $r=3$. For No. $2$ in \cite[(6.3)]{Wisniewski1994}, $X$ is actually a divisor of bidegree $(1,2)$ in $\bbP^{r+1}\times\bbP^r$ (cf. \cite[Theorem 6.8]{BallicoWisniewski1996}), then we can conclude by a straightforward computation.
	
	Now we consider the case $\rho(X)=1$. Note that in this case $T_X$ is semi-stable if $n=6$ (cf. \cite[Theorem 3]{Hwang1998}), so our result follows from \cite[Proposition 3.3]{Floris2013} in this case. It remains to consider the case $n=7$ and $\rho(X)=1$. By Theorem \ref{Chern class} (2), we get 
	\[c_2(X)\cdot H^5\geq 6H^7.\]
	In view of the proof of \cite[Proposition 3.3]{Floris2013}, we have
	\begin{align*}
	h^0(X,H) & = -\frac{1}{3}H^7+\frac{1}{12} c_2(X)\cdot H^5+4\geq \frac{1}{6}H^7+4>4.
	\end{align*}
	This completes the proof.
\end{proof}

\begin{remark}
	Combining Theorem \ref{Non-vanishingH} with \cite[Theorem 1.1]{Floris2013} gives the existence of ladder with at worst canonical singularities over $n$-dimensional Fano manifolds of index $n-3$. However, due to an example of H\"oring and Voisin \cite[Example 2.12]{HoeringVoisin2011}, one can not expect that there exists a smooth ladder over such a Fano manifold.
\end{remark}

\begin{proof}[Proof of Theorem \ref{coindexfour}]
	By Theorem \ref{Non-vanishingH} and \cite[Theorem 1.1]{Floris2013}, there exists a good ladder. More precisely, there is a descending sequence of subvarieties
	\[X=Y_0\supsetneq Y_1\supsetneq Y_2\supsetneq\cdots\supsetneq Y_{n-4}\supsetneq Y_{n-3}\]
	such that the variety $Y_{i+1}\in\vert H\vert_{Y_i}\vert$ has at most Gorenstein canonical singularities and $K_{Y_{n-3}}\sim 0$. 
	
	Thanks to \cite[Theorem 2]{OguisoPeternell1995}, the complete linear system $\left\vert mH\vert_{Y_{n-3}}\right\vert$ is basepoint free when $m\geq 7$. By Kawamata-Viehweg vanishing theorem and Serre duality, the natural restriction 
	\[H^0(X,mH)\rightarrow H^0(Y_{n-3},mH\vert_{Y_{n-3}})\]
	is surjective for $m\in\bbZ$, so the linear system $\vert mH\vert$ is basepoint free for $m\geq 7$.
	
	Note that the rational map defined by $\vert mH\vert_{Y_{n-3}}\vert$ is birational for $m\geq 5$ (cf.~Theorem \ref{Trivial Case}). By the same argument that we used in the proof of Theorem \ref{Three}, we conclude that the rational map given by $\vert mH\vert_{Y_{n-4}}\vert$ is birational for $m\geq 5$. Therefore, after an inductive argument on $n$, we see that the rational map given by $\vert mH\vert$ is birational for $m\geq 5$.
\end{proof}

\begin{remark}
	Let $X$ be a $n$-dimensional weak Fano variety with at worst Gorenstein canonical singularities. Let $H$ be its fundamental divisor. If its index $i_X$ is equal to $n-1$ or $n-2$, then the same argument in the proof of Theorem \ref{coindexfour} together with  Theorem \ref{Two}, Theorem \ref{Three} and Theorem \ref{Trivial Case} shows that the complete linear system $\vert mH\vert$ is basepoint free for $m\geq 2$ and the morphism $\Phi_{\vert mH\vert}$ is birational for $m\geq 3$. The existence of good ladder on such varieties was proved in \cite[Main Theorem]{Ambro1999} (see also \cite[Theorem 5.1]{Mella1999,Kawamata2000}).
\end{remark}

\begin{remark}
	Theorem \ref{Non-vanishingH} and Theorem \ref{coindexfour} are true even for weak Fano fivefolds with at worst canonical Gorenstein singularties. However, in higher dimension, the argument of Theorem \ref{Chern class} relies on the smoothness, so it may be interesting to find an alternative proof of Theorem \ref{Non-vanishingH} which does not depend on the smoothness; then, we expect that Theorem \ref{coindexfour} still holds for weak Fano varieties with at worst canonical Gorenstein singularities.
\end{remark}

\subsection*{Acknowledgements} 	

I heartly thank my advisor Andreas H{\"o}ring for valuable discussions, suggestions and for his careful proofreading of the rather awkward draft of this paper. I want to thank Chen Jiang and St\'ephane Druel for helpful communications and comments. I also would like to thank the anonymous reviewer for pointing out several inacuracies in previous versions and for his valuable comments and suggestions to improve the explanation of the paper. This paper was written while I stayed in Institut de Recherche Math{\'e}matique de Rennes (IRMAR) and I would like to thank the institution for the hospitality and support.

\def\cprime{$'$}

\renewcommand\refname{References}
\bibliographystyle{alpha}
\bibliography{birationality}

\end{document}